\newtheorem{theorem}{Theorem}[section]
\newtheorem*{theorem A}{Theorem A}
\newtheorem*{theorem B}{N\"olker's Theorem}
\newtheorem{lemma}{Lemma}[section]
\newtheorem{proposition}{Proposition}[section]
\theoremstyle{remark}
\newtheorem {notation}{Notation}
\theoremstyle{remark}
\newtheorem{remark}{Remark}[section]
\theoremstyle{remark}
\theoremstyle{definition}
\newtheorem{definition}{Definition}[section]
\newtheorem{example}{Example}[section]
\numberwithin{equation}{section}
\def\({\left ( }
\def\){\right )}
\def\<{\left < }
\def\>{\right >}
\begin{document}





\vspace{2cm}

\title[The Isoperimetric Inequality on Compact Rank One Symmetric Spaces]{The Isoperimetric Inequality on Compact Rank One Symmetric Spaces and Beyond}

\author{Yashar Memarian}

\address{Last institution: Department of Mathematics\\
    University of Notre-Dame}
\email{y.memarian@gmail.com}

\thanks{We thank Karsten Grove for motivating us to write this paper. The paper started when the author visited the University of Notre-Dame. We would like to thank the hospitality of the mathematics department during our stay. We also thank the referees for their extremely helpful comments which made this paper realizable.}

\subjclass[2010]{53C21}

\date{June 2020}

\keywords{Isoperimetry, needle, Ricci Curvature, Sectional Curvature, Positively Curved Manifolds }

\begin{abstract}
Klartag's needle decomposition technique enables one to obtain strong isoperimetric inequalities on Riemannian manifolds other than the classical known examples. As a result, in this paper, we obtain sharp isoperimetric inequalities for compact rank one symmetric spaces (CROSS). Namely, for the real projective space $\mathbb{R}P^n$, we demonstrate that the isoperimetric regions are given by either the geodesic balls or tubes around some $\mathbb{R}P^k\subset\mathbb{R}P^n$. For the complex projective space $\mathbb{C}P^n$, the isoperimetric regions are given by either the geodesic balls or tubes around some $\mathbb{C}P^k\subset\mathbb{C}P^n$. And for the quaternionic projective space, the isoperimetric regions are given by either the geodesic balls or tubes around some $\mathbb{H}P^k\subset\mathbb{H}P^n$.
\end{abstract}
\maketitle

\section{Introduction}

Isoperimetric problems are some of the oldest problems in geometry. Given a space, one looks for \emph{domains} of a given volume with the least boundary \emph{area}. It is known that in model spaces (\emph{i.e.} Euclidean spaces, spheres and hyperbolic spaces), for every given number $v$, the \emph{intrinsic balls} with volume $v$ have the least surface area among every domain with the same volume. When we leave the world of model spaces, or when we are dealing with geometric spaces with boundary, the solution to isoperimetric problems has provided real difficulties. Of course, when the number $v$ is small enough, for spaces which locally \emph{look} like Euclidean spaces (for instance manifolds), one expects that the solution of isoperimetric problems still would be the metric balls. However, for larger $v$ and shapes with non-constant curvature or with non-smooth boundaries, the isoperimetric problem in general is very hard to solve. There are several books and surveys related to isoperimetric problems, for instance \cite{ros}, \cite{oserman}, \cite{groiso}, \cite{chavel}, \cite{figali}.

In \cite{gromil}, Gromov-Milman prove a general isoperimetric problem on spheres with non-necessarily canonical Riemannian volume. They use their result to obtain an isoperimetric inequality for unit spheres of uniformly-convex Banach spaces (for example $L^p$-unit spheres). Their method relies on the geometry and topology of the sphere. They use a powerful technique (known as the localization method) in order to prove their result(s). Obtaining similar results for more general manifolds using the localization method was not possible since one did not have such tools for spaces other than model spaces (even on the canonical hyperbolic space we are not aware of any result for which the localization method has been used). 

Recently, Klartag in \cite{kl} proves a localization theorem on every closed Riemannian manifold. The topic of this paper concerns utilizing Klartag's  localization results in order to prove sharp isoperimetric inequalities for compact rank one symmetric spaces, namely the real, complex and quaternionic projective spaces.

We begin by recalling Klartag's results on needle decomposition and its link to the isoperimetric problems. In further sections we concentrate on studying the isoperimetric problem on real, complex and quaternionic projective spaces.

\section{Klartag's Generalisation of Gromov-Milman Isoperimetric Inequality on Riemannian Manifolds}

In this manuscript, we are mainly interested in closed, smooth $n$-dimensional Riemannian manifolds with some lower bounds on the Ricci curvature. A Riemannian manifold $M$ satisfies the curvature-dimension condition $CD(k,N)$, if the dimension of $M$ is at most equal to $N$ and if for every point $m\in M$ and every tangent vector $v\in T_{m}M$ we have:
\begin{equation} \label{eqn:ric}
Ricci(v,v)\geq k.g(v,v),
\end{equation}
where $g$ is the metric tensor of $M$ and $Ricci$ is the Ricci tensor of $M$. When for a Riemannian manifold $M$, there exists a $k\in\mathbb{R}$ such that for every point $m\in M$ and every tangent vector $v\in T_{m}M$, inequality (\ref{eqn:ric}) holds, we simply write that on $M$ we have:
\begin{eqnarray*}
Ricci\geq k.
\end{eqnarray*}
The curvature-dimension condition and its extension to the case of weighted Riemannian manifolds were first defined in the pioneer work \cite{be}. 

The volume Riemannian volume of $M$ is denoted by $vol_n$ emphasizing the dimension of $M$.

We recall the (metric-measure) invariant \emph{separation distance} introduced by Gromov in \cite{grobook} (and motivated by ideas in \cite{gromil}).

\begin{definition}[Separation Distance] \label{duf}
Let $\kappa_1, \kappa_2>0$ be such that
\begin{eqnarray*}
\kappa_1+\kappa_2<1.
\end{eqnarray*}
The \emph{separation distance} on $M$ (with respect to $\kappa_1$ and $\kappa_2$), denoted by:
\begin{eqnarray*}
Sep(M,\kappa_1,\kappa_2),
\end{eqnarray*}
is the supremum of those $\delta$, where  for $i=1,2$, subsets $A_i\subset M$ exist with $\mu(A_i)\geq \kappa_i$ and $dist(A_1,A_2)\geq \delta$. Here 
\begin{eqnarray*}
dist(A_1,A_2)=\inf_{(a_1,a_2)\in A_1\times A_2} d(a_1,a_2),
\end{eqnarray*}
where $d(.)$ is the distance induced by the Riemannian metric on $M$. We say that the sets $A_i$ (for $i=1,2$) with $\mu(A_i)=\kappa_i$, realize the separation distance if $dist(A_1,A_2)=Sep(M,\kappa_1,\kappa_2)$.
\end{definition}
\begin{remark}
\begin{itemize}
\item We denote the distance between two sets $A_1,A_2\subset M$ defined in definition \ref{duf} by:
\begin{eqnarray*}
d(A_1,A_2)=dist(A_1,A_2).
\end{eqnarray*}
This distance should not be confused with the Hausdorff distance.
\item One can define the separation distance with respect to several (positive real) numbers $\kappa_1,\cdots,\kappa_k$, in the same manner as defined with respect to two numbers. For the purpose of this paper, we only require this definition with respect to two numbers.
\end{itemize}
\end{remark}

We now need the definition of a needle on $n$-dimensional Riemannian manifolds $M$ with $Ricci(M)\geq Ricci(\mathbb{S}^n)$. Since we are using Klartag's work on needle decomposition of Riemannian manifolds, we present the definition of the needle exactly as it is defined in \cite{kl}:

\begin{definition}[Needles] \label{defneedle}
Let $M$ be an $n$-dimensional Riemannian manifold. A $CD(k,n)$-needle on $M$ is a measure $\nu$ on $M$ such that the following holds:
\begin{itemize}
\item There exists a non-empty open set $A\subset\mathbb{R}$ and a smooth function $\psi:A\to\mathbb{R}$.
\item The function $\psi^{1/n-1}$ satisfies the following differential inequality on $A$:
\begin{equation} \label{eqn:sinconc}
(\psi^{\frac{1}{n-1}})''+(\frac{k}{n-1})\psi^{\frac{1}{n-1}}\leq 0.
\end{equation}
\item There exists a minimizing geodesic $\gamma:A\to M$ such that:
\begin{eqnarray*}
\nu=\gamma_{*}(\psi(x)dx).
\end{eqnarray*}
Hence the measure $\nu$ defined on $M$ is the push-forward of the measure $\psi(x)dx$ on $A$ which has a density function $\psi$ with respect to the Lebesgue measure $dx$.
\end{itemize}
A smooth function satisfying inequality (resp. equality) (\ref{eqn:sinconc}) for $k=n-1$ is called a $\sin^{n-1}$-concave (resp. affine) function. A measure $\nu$ which has a $\sin^{n-1}$ density function is called a $\sin^{n-1}$-concave measure.
\end{definition}

\begin{remark}
\begin{itemize}
\item Sometimes it may be important to recall the parameter interval $A$ on which the pull-back of a needle is defined and supported. In such case, we may denote a $CD(k,n)$-needle by $(I,\nu)$ where $I\subset\mathbb{R}$ is a non-empty open connected subset of $\mathbb{R}$ and the pull-back of the measure $\nu$ is supported on $I$. This is useful when one considers needles on intervals of $\mathbb{R}$ \emph{i.e.} geodesic segments of $\mathbb{R}$.
\item In definition \ref{defneedle}, when $k=n-1$, the needles may be referred to as $\sin^{n-1}$-concave needles (or measures). 
\item The inequality (\ref{eqn:sinconc}) is a \emph{local} definition of the $\sin^n$-concave functions. One (equivalently) may define a $\sin^n$-concave function on an interval $A\subset\mathbb{R}$ as a function satisfying the following inequality:
\begin{equation} \label{eqn:ohtz2}
f(\frac{x_1+x_2}{2})^{1/n}\geq \frac{f(x_1)^{1/n}+f(x_2)^{1/n}}{2\cos(\frac{|x_2-x_1|}{2})},
\end{equation}
for every $x_1,x_2\in A$. See \cite{vil} for a proof of the equivalence between inequalities (\ref{eqn:ohtz2}) and (\ref{eqn:sinconc}).
\item According to definition \ref{defneedle}, we may view needles as intrinsic metric-measure spaces (\emph{i.e.} not necessarily embedded on a manifold $M$) where the geometric space is an interval of $\mathbb{R}$, the metric being the Euclidean metric and the measure being the $\sin^n$-concave probability measure defined on this interval. On the other hand, it is important to bear in mind that needles inherit the geometry of the manifold on which they are \emph{embedded}.
\end{itemize}
\end{remark}

The following definition is handy since we shall be dealing with probability measures all along this manuscript:
\begin{definition}[Normalizing Constant]
Let $f(x)dx$ be a measure supported on an interval $A\subset\mathbb{R}$ where $dx$ is the Lebesgue measure. We say $C\in\mathbb{R}$ is the normalizing constant for $f(x)dx$, if $Cf(x)dx$ is a probability measure supported on $A$.
\end{definition}

We present some classical examples to get familiarized with some needles:

\begin{example}
On the canonical sphere $\mathbb{S}^n$, let $\{x,-x\}$ be two diametrically opposite points. Let $\gamma$ be a maximal geodesic from $x$ to $-x$. This geodesic being parametrized by the interval $(0,\pi)$ which is enhanced with the normalized measure $C\cos(t)^{n-1}dt$. Then the push-forward of this measure on $\mathbb{S}^n$ is an example of a needle on the canonical sphere. 
\end{example}
\begin{example}
A $\sin^n$-affine probability measure $\nu$ defined on an open connected interval $A\subset\mathbb{R}$ is defined by $\nu=(C_1\sin(t)+C_2\cos(t))^ndt$ for some $C_1,C_2\in\mathbb{R}$ where $dt$ is the Lebesgue measure.
\end{example}

We have mentioned that one could consider needles defined intrinsically, and viewing them as such a needle is a metric-measure space defined on its own. But how are the needles constructed on a manifold $M$ (with $Ricci(M)\geq Ricci(\mathbb{S}^n$)?

For the case $M=\mathbb{S}^n$, the construction of a needle is explained in \cite{gromil}. Indeed, the authors show that needles are obtained as a \emph{limit} of $S_1\supset S_2\supset\cdots$, where each $S_i$ is a (geodesically) convex subset of $\mathbb{S}^n$. Here, the limit is a weak limit of the normalized volume of the $S_i$ (for $i=1\cdots \infty$).

When $M$ is no longer the canonical sphere, the definition and construction of needles are explained in \cite{kl}. The strategy of the author in \cite{kl} is to define some geometric objects called \emph{needle candidates}, and then show that every needle candidate is a needle in the sense of definition \ref{defneedle}. We present the definition of the needle candidates below and shall understand needles as some \emph{measures} which are tied to the geometry of $M$.

Let $\gamma:[0,a]\to M$ be a minimizing geodesic on $M$ which is parametrized by its arclength. One may use the Riemannian connection on $M$ and define the covariant derivative of any smooth vector field $J$ along $\gamma$. A smooth vector field $J$ along $\gamma$ is a smooth vector field $J$ such that $J(t)\in T_{\gamma(t)}M$ (the tangent space of $M$ at point $\gamma(t))$. The covariant derivative of $J$ along $\gamma$ is denoted by
\begin{eqnarray*}
J'=\nabla_{\gamma'}J,
\end{eqnarray*}
where $\gamma'$ is the tangent vector field along $\gamma$.

A vector field along $\gamma$ is called a \emph{Jacobi field} if the following equation is satisfied for every $t\in [0,a]$:
\begin{eqnarray*}
J''(t)=R(\gamma'(t),J(t))\gamma'(t),
\end{eqnarray*}
where $R$ is the Riemann curvature tensor and for $X,Y,Z\in TM$ is defined by:
\begin{eqnarray*}
R(X,Y)Z=\nabla_{X}\nabla_{Y}Z-\nabla_{Y}\nabla_{X}Z-\nabla_{[X,Y]}Z.
\end{eqnarray*}
For every smooth function $f\in C^{\infty}(M)$, $[X,Y]$ is defined by
\begin{eqnarray*}
[X,Y](f)=X(Y(f))-Y(X(f)).
\end{eqnarray*}

With this background, we are now ready to give the definition of needle candidates presented in \cite{kl}:

\begin{definition}[Needle Candidates] \label{nc}
A measure $\nu$ on a Riemannian manifold $M$ is called a \emph{needle candidate} on $M$ if there exists a non-empty subset $(a,b)\subset\mathbb{R}$ with $ a,b\in\mathbb{R}\cup\{-\infty,+\infty\}$, a measure $\mu$ on $(a,b)$, a minimizing geodesic $\gamma:(a,b)\to M$ and Jacobi fields $J_1(t),\cdots,J_{n-1}(t)$ along $\gamma$ with the following properties:
\begin{itemize}
\item The measure $\nu$ is the push-forward of the measure $\mu$ under the map $\gamma$.
\item Denote $J_n=\gamma'$. Then the measure $\mu$ is absolutely continuous with respect to the Lebesgue measure in $(a,b)$ and its density is proportional to 
\begin{eqnarray*}
t\to\sqrt{\det(<J_i(t),J_k(t)>)_{i,k=1,\cdots,n}}.
\end{eqnarray*}
\item There exists $t\in(a,b)$ with
\begin{eqnarray*}
<J_i(t),\gamma'(t)>\:=\:<J'_i(t),\gamma'(t)>=0\:(i=1\cdots,n-1),
\end{eqnarray*}
and
\begin{eqnarray*}
<J'_i(t),J_k(t)>\:=\:<J'_k(t),J_i(t)>\: (i,k=1,\cdots,n-1).
\end{eqnarray*}
\item Either for all $t\in(a,b)$ the vectors
\begin{eqnarray*}
J_1(t),\cdots J_{n-1}(t)\in T_{\gamma(t)}M
\end{eqnarray*}
are linearly independent or else for all $t\in(a,b)$ these vectors are linearly dependent.
\end{itemize}
\end{definition}
\begin{remark}
In \cite{kl} where a  needle candidate is defined, there exists a Lipschitz function $u$ such that the needle candidate is defined \emph{along} the integral curves of the gradient of $u$. We refer the reader to \cite{kl} for all details.
\end{remark}

The following, proved in \cite{kl}, relates the needle candidates to the needles of definition \ref{defneedle}: 
\begin{proposition} \label{pff}
Let $M$ be a Riemannian manifold with $Ricci(M)\geq Ricci(\mathbb{S}^n$). Let $\nu$ be a needle candidate defined in \ref{nc}. Then $\nu$ is a needle as defined in \ref{defneedle}.
\end{proposition}

We would like to extend the definition of \emph{separation distance} to the class of needles and/or needle candidates.

Should one follow the intrinsic point of view, one sets $n,k\geq 0$ and considers the class of $CD(k,n)$-needles. We consider probability measures and then this is a cone in the space of probability measures with support in $\mathbb{R}$ (see \cite{growst}). Let $\mathcal{A}$ be a class of $CD(k,n)$-(probability) needle. Then one defines the \emph{needle separation distance} with respect to $\mathcal{A}$ as follows:
\begin{definition}
Given two positive real numbers $\kappa_1,\kappa_2>0$, such that
\begin{eqnarray*}
\kappa_1+\kappa_2<1,
\end{eqnarray*}
the \emph{needle separation distance} with respect to $\mathcal{A}$ is defined by
\begin{eqnarray*}
N_{\mathcal{A}}(\kappa_1,\kappa_2)=\sup_{\nu\in\mathcal{A}}Sep(\nu,\kappa_1,\kappa_2),
\end{eqnarray*}

where $Sep(\nu,\kappa_1,\kappa_2)$ is the separation distance on the metric-measure space $(I,\nu)$, and the supremum is taken over every needle in $\mathcal{A}$.

We say $\nu$ realizes the needle separation distance $N_{\mathcal{A}}(\kappa_1,\kappa_2)$ if
\begin{eqnarray*}
Sep(\nu,\kappa_1,\kappa_2)=N_{\mathcal{A}}(\kappa_1,\kappa_2).
\end{eqnarray*}
Moreover the sets $I_i\subseteq I$ with $\nu(I_i)=\kappa_i$ (for $i=1,2$) realize the needle separation distance if 
\begin{eqnarray*}
d(I_1,I_2)=N_{\mathcal{A}}(\kappa_1,\kappa_2).
\end{eqnarray*}
\end{definition}

We are interested in studying the isoperimetric problem on a Riemannian manifold. For this purpose we need the separation distance for the needle candidates, as the latter reflects in some ways the isoperimetric properties of $M$.

\begin{definition}[Needle Candidate Separation Distance]
Let $M$ be a Riemannian manifold of dimension $n$ with $Ricci(M)\geq Ricci(\mathbb{S}^n)$. Given two positive real numbers $\kappa_1,\kappa_2>0$ such that
\begin{eqnarray*}
\kappa_1+\kappa_2<1,
\end{eqnarray*}
the \emph{needle candidate separation distance} is defined as follows:
\begin{eqnarray*}
N_{M}(\kappa_1,\kappa_2)=\sup_{\nu}Sep(\nu,\kappa_1,\kappa_2).
\end{eqnarray*}
Here, $Sep(\nu,\kappa_1,\kappa_2)$ is the separation distance on the metric-measure space $(\gamma,\nu)$ (\emph{i.e.} a geodesic segment and a probability measure defined upon it). The supremum is taken over all needle candidates $\nu$ which can be defined on $M$. We say $I_i\subseteq I$ with $\nu(I_i)=\kappa_i$ (for $i=1,2$) realize the needle candidate separation distance if 
\begin{eqnarray*}
d(I_1,I_2)=N_{M}(\kappa_1,\kappa_2).
\end{eqnarray*}
Furthermore we say $A_i\subseteq M$ with $\mu(A_i)=\kappa_i$ realize the \emph{needle candidate separation distance} if 
\begin{eqnarray*}
d(A_1,A_2)=N_{M}(\kappa_1,\kappa_2). 
\end{eqnarray*}
\end{definition}


In the figure below, we review the definition of the separation and needle separation distances on an example:

\begin{center}
\includegraphics[width=2.5in]{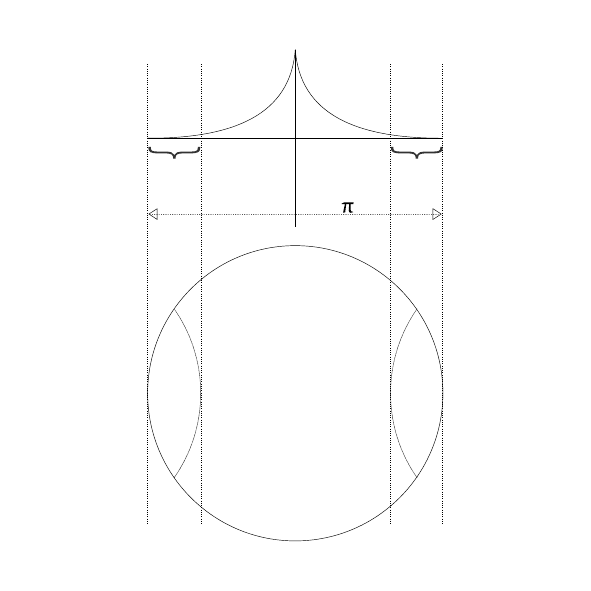}
\end{center}

In the upper part, one can find the graph of the function $C\cos(t)^n$ on the interval $(-\pi/2,+\pi/2)$, for some $n>1$ where $C$ is the normalizing constant. On the left and right hand sides of the interval $(-\pi/2,+\pi/2)$, we find two sub-intervals with two given measures (say $\kappa_1$ and $\kappa_2$). The separation distance for the needle $((-\pi/2,+\pi/2), C\cos(t)^n dt)$ is the distance between these two intervals where the distance is defined in definition \ref{duf} (and should not be confused with the Hausdorff distance). Below the graph of this needle, one can observe a disc which is seen as the projection of a (hemi)-sphere. The diameter of the sphere is equal to the diameter of the needle pictured above it. The left and right hand sides of this disc are the projection of two spherical balls. Each spherical ball has a measure equal to the measure of the interval on the needle shown above it. The separation distance for this sphere is now defined to be the distance (in the sense of definition \ref{duf}) between these two spherical balls.

We are now ready to present the following strong Theorem which is due to Klartag:
\begin{theorem} \label{main}
Let $M$ be a closed, smooth Riemannian manifold where $Ricci(M)\geq Ricci(\mathbb{S}^n)$. Given $\kappa_1,\kappa_2>0$ such that
\begin{eqnarray*}
\kappa_1+\kappa_2<1,
\end{eqnarray*}
we have:
\begin{eqnarray*}
Sep(M,\kappa_1,\kappa_2)\leq N_{M}(\kappa_1,\kappa_2).
\end{eqnarray*}
\end{theorem}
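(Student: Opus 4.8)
The plan is to imitate the Gromov--Milman localisation argument, substituting Klartag's needle decomposition for the spherical one. First I would fix $\varepsilon>0$ and choose $A_1,A_2\subset M$ with $\mu(A_i)\ge\kappa_i$ and $dist(A_1,A_2)\ge Sep(M,\kappa_1,\kappa_2)-\varepsilon=:\delta$; since the Riemannian volume is non-atomic, after replacing each $A_i$ by a measurable subset I may assume $\mu(A_i)=\kappa_i$ exactly (shrinking only enlarges $dist(A_1,A_2)$), and I may assume $\kappa_1,\kappa_2>0$, the degenerate cases being immediate. Then I would form the mean-zero test function
\[
g=\frac{1}{\kappa_1}\mathbf 1_{A_1}-\frac{1}{\kappa_2}\mathbf 1_{A_2},\qquad \int_M g\,d\mu=1-1=0,
\]
and apply Klartag's localisation theorem. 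Because $Ricci(M)\ge Ricci(\mathbb S^n)=n-1$, this produces a disintegration $\mu=\int_Q\nu_q\,d\mathfrak m(q)$ over a probability space $(Q,\mathfrak m)$, where for $\mathfrak m$-almost every $q$ the pair $(I_q,\nu_q)$ is a $1$-needle in the sense of the Definition (a geodesic segment of length $\le\mathrm{diam}(M)\le\pi$ carrying a $\sin^{N}$-concave probability measure, $N\ge n-1$) and $\int_{I_q}g\,d\nu_q=0$.

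Next I would extract one good needle by averaging. For a.e.\ $q$, the relation $\int g\,d\nu_q=0$ gives $\nu_q(A_1)/\kappa_1=\nu_q(A_2)/\kappa_2=:s_q\ge 0$. Integrating against $\mathfrak m$ and using the disintegration, $\kappa_1\int_Q s_q\,d\mathfrak m=\int_Q\nu_q(A_1)\,d\mathfrak m=\mu(A_1)=\kappa_1$, so $\int_Q s_q\,d\mathfrak m=1$; since $\mathfrak m$ is a probability measure this forces $\mathfrak m(\{s_q\ge 1\})>0$. Intersecting that set with the full-measure set of $q$ for which $(I_q,\nu_q)$ is a genuine $1$-needle and $\int g\,d\nu_q=0$, I can pick a single such $q$, and for it $\nu_q(A_i)=s_q\kappa_i\ge\kappa_i$ for $i=1,2$.

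Finally I would transfer the distance estimate to that needle. Put $B_i=A_i\cap I_q$; then $B_i\neq\emptyset$ (it has positive $\nu_q$-measure), $\nu_q(B_i)\ge\kappa_i$, and parametrising $I_q$ by arclength identifies it isometrically with an interval of $\mathbb R$ on which the arclength distance dominates the ambient distance in $M$, so $d_{I_q}(B_1,B_2)\ge dist(A_1,A_2)\ge\delta$. Hence $Sep\big((I_q,\nu_q),\kappa_1,\kappa_2\big)\ge\delta$, and by definition $N(\kappa_1,\kappa_2)\ge\delta=Sep(M,\kappa_1,\kappa_2)-\varepsilon$; letting $\varepsilon\to0$ finishes the argument. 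I expect the only genuinely delicate point to be the application of Klartag's theorem itself: one must verify that the bound $Ricci(M)\ge n-1$ meets his curvature--dimension hypothesis and that the conditional densities he outputs are exactly the $\sin^{N}$-concave functions of Definition (so each $(I_q,\nu_q)$ is a bona fide $1$-needle), and one must check that the $\mu$-null exceptional set left uncovered by the decomposition can be discarded without affecting any of the integral identities above, which it can since it carries no mass. Everything else is the elementary averaging and the comparison of intrinsic with ambient distance.
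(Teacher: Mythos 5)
Your proposal is correct and follows essentially the same route as the paper: apply Klartag's localisation theorem to a mean-zero combination of the indicator functions of the two separated sets, and transfer the distance bound to a single needle of the resulting partition. In fact your averaging step (showing $\int_Q s_q\,d\mathfrak m=1$, hence a positive-measure family of needles with $\nu_q(A_i)\ge\kappa_i$ for both $i$ simultaneously) is more careful than the paper's own argument, which asserts that almost every needle satisfies $\nu(I\cap A_i)=\kappa_i$, while Klartag's theorem only yields the proportionality $\nu(I\cap A_1)/\kappa_1=\nu(I\cap A_2)/\kappa_2$ needle by needle; your version supplies exactly the missing selection of a good needle.
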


Similar to the proof in \cite{gromil}, this Theorem is proved by using the localization technique.

Gromov-Milman prove this theorem on the sphere $\mathbb{S}^n$, in which the topology of the sphere enables one to construct such needles. However, on general Riemannian manifolds, one can not provide needle decomposition in the same way as one provides needle decomposition on $\mathbb{S}^n$. To remedy this issue, Klartag (in \cite{kl}) presents a localization theorem on every closed Riemannian manifold. Theorem \ref{main} has very important consequences, when applied on specific examples.

We provide a proof of Klartag's separation distance Theorem \ref{main} here and further present a corollary of this Theorem which links this result to the study of (sharp) isoperimetric inequalities on Riemannian manifolds. 

\begin{theorem}[Klartag] \label{klartag}
Let $M$ be a closed, smooth Riemannian manifold with $Ricci(M)\geq Ricci(\mathbb{S}^n)$. Let $\mu$ be the Riemannian volume on $M$. Let $f$ be a $\mu$-integrable function such that 
\begin{eqnarray*}
\displaystyle\int_{M}f(x)d\mu(x)=0.
\end{eqnarray*}
Then, there exist a partition $\Pi$ of $M$, a measure $\nu$ on $\Pi$, a family $\{\mu_{\pi}\}_{\pi\in\Pi}$ of measures on $M$ such that:
\begin{itemize}
\item For any Lebesgue-measurable set $U\subseteq M$ we have:
\begin{eqnarray*}
\mu(A)=\displaystyle\int_{\Pi}\mu_{\pi}d\nu(\pi).
\end{eqnarray*}
\item For $\nu$-almost every $\pi\in\Pi$, the set $\pi\subset M$ is the image of a minimizing geodesic. The measure $\mu_{\pi}$ is supported on $\pi$, and either $\pi$ is a singleton or a needle candidate in the sense of definition \ref{nc}.
\item For $\nu$-almost any $\pi\in\Pi$ we have:
\begin{eqnarray*}
\displaystyle\int_{\pi}f d\mu_{\pi}=0.
\end{eqnarray*}
\end{itemize}
\end{theorem}
\begin{remark}
Consult \cite{kl} for a \emph{precise} understanding on how the \emph{partitions} are defined and constructed. In \cite{kl} Klartag proves that the partition of Theorem \ref{klartag} is obtained as the integral curves of the gradient of some function $u:M\to\mathbb{R}$.
\end{remark}

\subsection{Proof of Theorem \ref{main}}

\begin{proof}

Let $ \kappa_1,\kappa_2>0$  such that
\begin{eqnarray*}
\kappa_1+\kappa_2<1
\end{eqnarray*}
be given. Since $M$ is compact, there exist $U_i$ which realize the separation distance (with respect to $\kappa_i$) for $i=1,2$. According to Theorem \ref{klartag}, there exists a partition of $M$ into needle candidates such that for almost every needle candidate $\nu$ we have:
\begin{eqnarray*}
\nu(U_i)=\kappa_i.
\end{eqnarray*}
Indeed, assume $\mu(U_1)=\kappa\mu(U_2)$. Hence the integral of the function $\xi_{U_1}-\kappa\xi_{U_2}$ is equal to zero on $M$. Here $\xi$ is the indicatrice function. We apply Theorem \ref{klartag} with respect to this function.

Since we have a partition of $M$ into needle candidates (which satisfy the measure assumption above), there exists (at least) a needle $\nu$ in this partition such that 
\begin{eqnarray*}
d((I\cap U_1),(I\cap U_2))\geq d(U_1,U_2),
\end{eqnarray*}
where $\gamma:A\to M$ and $\gamma(A)=I\subset M$.

Hence by definition we have:
\begin{eqnarray*}
N_{M}(\kappa_1,\kappa_2)&\geq& d((I\cap U_1),(I\cap U_2))\\
                    &\geq& d(U_1,U_2)\\
                    &=& Sep(M,\kappa_1,\kappa_2).
\end{eqnarray*}

This completes the proof of Theorem \ref{main}.

\end{proof}
\begin{remark}
Theorem \ref{main} is valid for the class of weighted Riemannian manifolds satisfying the generalized curvature-dimension property. Consult \cite{kl} for this purpose.
\end{remark}
\begin{notation}
For $A\subset M$ and for $\varepsilon>0$, $A+\varepsilon$ stands for the $\varepsilon$-neighborhood of the set $A$.
\end{notation}
Let us see how Theorem \ref{main} may be used for solving isoperimetric (type) problems. This is explained in \cite{kl} and \cite{kl2}.
\begin{proposition} \label{impoo}
Let $M$ be a smooth closed $n$-dimensional Riemannian manifold with $Ricci(M)\geq Ricci(\mathbb{S}^n)$. Let $\mu$ be the normalized Riemannian volume and $\varepsilon>0$ be fixed. For every $0<v\leq 1$, and for every open set $A$ with measure equal to $v$, let 
\begin{eqnarray*}
w(A)=1-\mu(A+\varepsilon).
\end{eqnarray*}
Suppose $(B,B_1)\subseteq M\times M$ exists which realizes the \emph{needle separation distance} $N(v,w(A))$. Then we have: 
\begin{eqnarray*}
\mu(A+\varepsilon)\geq \mu(B+\varepsilon).
\end{eqnarray*}
\end{proposition}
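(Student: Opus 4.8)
The plan is to compare $A$ against a needle-optimal configuration via Theorem~\ref{main}. First I would observe that by definition of $w(A)$, the complement of $A+\varepsilon$ is a set of measure $w(A)$ whose distance from $A$ is at least $\varepsilon$; hence $Sep(M,v,w(A))\geq\varepsilon$. Applying Theorem~\ref{main} gives $N(v,w(A))\geq\varepsilon$ as well. Now invoke the hypothesis that $(B,B_1)$ realises $N(v,w(A))$, so $\mu(B)=v$, $\mu(B_1)=w(A)$, and $d(B,B_1)=N(v,w(A))\geq\varepsilon$. Because $d(B,B_1)\geq\varepsilon$, the set $B_1$ is disjoint from $B+\varepsilon$, which forces $\mu(B+\varepsilon)\leq 1-\mu(B_1)=1-w(A)=\mu(A+\varepsilon)$. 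Wait — this chain of inequalities runs the wrong way; let me reorganize.

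The correct route is to run the argument so that the extremal needle configuration $(B,B_1)$ serves as a \emph{lower} bound for the enlargement of $A$. I would argue as follows. Set $w=w(A)=1-\mu(A+\varepsilon)$. The pair $\big(A,\,M\setminus(A+\varepsilon)\big)$ has measures $v$ and $w$ and separation distance at least $\varepsilon$, so $Sep(M,v,w)\geq\varepsilon$; by Theorem~\ref{main}, $N(v,w)\geq\varepsilon$. Suppose for contradiction that $\mu(A+\varepsilon)<\mu(B+\varepsilon)$. Then $1-\mu(B+\varepsilon)<1-\mu(A+\varepsilon)=w=\mu(B_1)$; but $B_1$ lies in the complement of $B+\varepsilon$ (since $d(B,B_1)=N(v,w)\geq\varepsilon$), giving $\mu(B_1)\leq 1-\mu(B+\varepsilon)$, a contradiction. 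Hence $\mu(A+\varepsilon)\geq\mu(B+\varepsilon)$, which is exactly the claim.

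The only genuinely delicate point is verifying that $Sep(M,v,w(A))\geq\varepsilon$, i.e.\ that $A$ and $M\setminus(A+\varepsilon)$ are at distance at least $\varepsilon$ and that the latter has measure exactly (or at least) $w(A)$; the measure statement is immediate from the definition of $w(A)$, while the distance claim follows since any point at distance $<\varepsilon$ from $A$ lies in $A+\varepsilon$ by definition of the $\varepsilon$-neighbourhood. One should be slightly careful about whether the infimum defining $dist$ is attained and about open-versus-closed enlargements, but since $M$ is compact and $A$ is open these are routine. I expect the main conceptual obstacle to be simply packaging Theorem~\ref{main} in the contrapositive direction so that the needle-extremal pair $(B,B_1)$ controls the enlargement of $A$ from below rather than from above; once that bookkeeping is set up, the proof is a short chain of elementary measure and distance inequalities.
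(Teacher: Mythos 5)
Your proposal is correct and follows essentially the same route as the paper: bound $Sep(M,v,w)\geq\varepsilon$ via the pair $\bigl(A,\,M\setminus(A+\varepsilon)\bigr)$, push this through Theorem~\ref{main} to get $d(B,B_1)=N(v,w)\geq\varepsilon$, and conclude $\mu(B+\varepsilon)\leq 1-\mu(B_1)=\mu(A+\varepsilon)$. In fact your first chain of inequalities already ran the right way and proved the claim; the contradiction rewrite is just the same argument repackaged, so no reorganisation was needed.
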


\begin{proof}
Assume $A_1$ is an open set on $M$ with a measure equal to $v$. Let $\varepsilon>0$ be given and let $A_2=M\setminus (A_1+\varepsilon)$ and $A_3=(A_1+\varepsilon)\setminus A_1$. Let $\mu(A_2)=w$.

According to Theorem \ref{main} we have:
\begin{eqnarray*}
Sep(M,v,w)\leq N_{M}(v,w).
\end{eqnarray*}
By assumption, let $(B_1,B_2)\subseteq M\times M$ be such that $\mu(B_1)=v$, $\mu(B_2)=w$ and
\begin{eqnarray*}
d(B_1,B_2)=N_{M}(v,w).
\end{eqnarray*}

Therefore we obtain:
\begin{eqnarray*}
d(A_1,A_2)&=&\varepsilon \\
             &\leq& d(B_1,B_2).
\end{eqnarray*}
This means $\mu(M\setminus(B_1+\varepsilon))\geq \mu(A_2)$. And thus
\begin{eqnarray*}
\mu(A_1+\varepsilon)\geq \mu(B_1+\varepsilon).
\end{eqnarray*}
 The proof therefore follows.
\end{proof}


\section{Isoperimetric properties of Needles}

\bigskip Considering the $CD(k,n)$-needles as metric-measure spaces of their own, there is the natural problem of characterizing the needle(s) which attains $N(\kappa_1,\kappa_2)$ for a given $\kappa_1,\kappa_2\in(0,1)$. This question only makes sense if we define $\mathcal{A}$ (the domain in the cone of $CD(k,n)$-needles) on which we take the supremum of $Sep(\nu,\kappa_1,\kappa_2)$. Here we will be dealing with the \emph{most} general isoperimetric properties of needles. In other words, given $n\in\mathbb{N}$, we will be studying $N(\kappa_1,\kappa_2)$ when $\mathcal{A}$ is the entire cone of $CD(k,n)$-needles. Most of the techniques which follow shall also be used to study $N_{M}(\kappa_1,\kappa_2)$ for some specific (symmetric) Riemannian manifolds. 

Studying the properties of a needle is equivalent to studying the properties of their densities of their pull-back on an interval of $\mathbb{R}$. For simplicity, we take $k=n-1$ here and present some properties of the $\sin^n$-concave functions (measures).

There exists a classical way to study the properties of a $\sin^n$-concave function which is related to the local definition of such functions, namely the differential inequality (\ref{eqn:sinconc}). This is provided by the Sturm-Liouville theory (about differential equations): Let $u$ satisfy inequality (\ref{eqn:sinconc}) (resp. inequality) on the interval $[a,b]$. Let $v$ satisfy the \emph{equality} in (\ref{eqn:sinconc}) (resp. equality) on the same interval $[a,b]$, where $u(a)=v(a)$ and $u(b)=v(b)$ (or $u(a)=v(a)$ and $u'(a)=v'(a)$). Then $u\leq v$ on $[a,b]$.

In order to obtain integral formulae using the above method, one may use the following Lemma (assigned to Gromov and used to prove  the Bishop-Gromov inequality):
\begin{lemma} \label{gr}
Let $f, g$ be real positive function on $[0,A]$ where $A\in\mathbb{R}_{+}$ such that
\begin{eqnarray*}
\frac{f}{g}
\end{eqnarray*}
is a non-increasing function, then
\begin{eqnarray*}
\frac{\displaystyle\int_{0}^{x} f(t)dt}{\displaystyle\int_{0}^{x}g(t) dt}
\end{eqnarray*}
is a non-increasing function on $[0,A]$.
\end{lemma}

There exists a more geometric way (related to the \emph{global} properties of $\sin^n$-concave functions, namely inequality (\ref{eqn:ohtz2})) to study the properties of $\sin^n$-concave functions. This is worth explaining here:
\begin{itemize}
\item Let $\mu$ be a measure which has a density function $f$ and suppose $f$ is a $\sin^n$-concave function. By definition the function $g=f^{1/n}$ is a $\sin$-concave function. 
\item Consider the \emph{support} of $\mu$ and suppose the support is $I\subseteq [0,\pi]$. 
\item \emph{Transport} $I$ on the (half)-unit circle (denoted by $\mathbb{S}^1_{+}$), which is the unit circle in $\mathbb{R}^2$ with non-negative $y$-coordinates. By transport we mean a unit speed parametrization map $I\to\mathbb{S}^1_{+}$.
\item Consider the \emph{cone} over the arc $I$ in $\mathbb{R}^2_{+}$  denoted by $co(I)$, defined by:
\begin{eqnarray*}
co(I)= \{\cup(tI)\:\vert\, 0\leq t\leq 1\}.
\end{eqnarray*}
\item The definition of a $\sin$-concave function can now be given as follows: there exists a function $G$ in $C(I)$ such that $G$ is a concave function and $G$ is $1$-homogeneous (\emph{i.e.} $G(\lambda x)=\lambda x$ for $\lambda\geq 0$) and $G(x)=g(x)$ for $x\in I$.
\item Therefore, to understand the properties of a $\sin$-concave funcion/measure, one could study the properties of its extension on a cone in $\mathbb{R}^2$ where the extended function is a $1$-homogeneous concave function.
\end{itemize}

The above approach is followed in \cite{memwst}, and most of the properties of $\sin^n$-concave measures are obtained in this way. 

We present the main properties in the following Lemma and omit the proof. For complete proof one could consult \cite{memwst}.

\begin{lemma} \label{comp}
\begin{itemize}
\item
Let $0<\varepsilon\leq \pi/2$. Let $\tau\geq \varepsilon$. Let $f$ be a non-negative $\sin^n$-concave function on $[0,\tau]$, which attains its maximum at $0$. Let $h(t)=C\cos(t)^n$ where $C$ is chosen such that $f(\varepsilon)=h(\varepsilon)$. 
\begin{itemize}
\item Then we have
\begin{equation*}
\begin{cases}
  f(x)\geq h(x) & \text{for $x\in[0,\varepsilon]$}, \\
 f(x)\leq h(x) & \text{for $x\in[\varepsilon,\tau]$}.
\end{cases}
\end{equation*}
And $\tau\leq \pi/2$.

\item For every $k\geq 0$ and $\varepsilon\leq \pi/2$, we have :
\begin{equation} \label{eqn:jun}
\frac{\displaystyle\int_{0}^{\min\{\varepsilon,\tau\}}f(t)\sin(t)^k dt}{\displaystyle\int_{0}^{\tau}f(t)\sin(t)^k dt}\geq \frac{\displaystyle\int_{0}^{\varepsilon}\cos(t)^n\sin(t)^k dt}{\displaystyle\int_{0}^{\pi/2}\cos(t)^n\sin(t)^k dt}.
\end{equation}
\end{itemize}
\item Let $(s,k)\in\mathbb{N}\times\mathbb{N}$ be such that $s\leq k$. Then every $\sin^k$-concave function is also a $\sin^s$-concave function. Moreover, if $f$ is $\sin^m$-concave, $g$ is $\sin^n$-concave, then $fg$ is $\sin^{m+n}$-concave.
\item The above properties hold for functions which satisfy the differential inequality of (\ref{eqn:sinconc}). Indeed, for such functions, everything above is to compare with \emph{model} functions: 
\begin{eqnarray*}
\frac{C\sin^n(\sqrt{\lambda}t)}{\sqrt{\lambda}},
\end{eqnarray*}
which are the functions satisfying the differential equality in (\ref{eqn:sinconc}).
\item Let $I\subset\mathbb{R}$ be a connected closed interval. Let $\mu=f(t)dt$ be a probability measure where $f$ is a $\sin^n$-concave function. Let $\kappa_1,\kappa_2>0$ be such that 
\begin{eqnarray*}
\kappa_1+\kappa_2<1.
\end{eqnarray*}
Then, the separation distance 
\begin{eqnarray*}
Sep(\mu,\kappa_1,\kappa_2),
\end{eqnarray*}
is realized by $A_1,A_2\subset I$ with
\begin{eqnarray*}
\mu(A_1)&=&\kappa_1 \\
\mu(A_2)&=&\kappa_2.
\end{eqnarray*}
Moreover, both $A_1$ and $A_2$ are \emph{connected} intervals.
\item Let $\mu_1=C_1 f(t)dt$ be a probability measure on $(0,L_1)$ where $f$ is a $\sin^n$-concave function. Let $\mu_2=C_2 f(t)dt$ be a probability measure on $(0,L_2)$ where
\begin{eqnarray*}
L_2\geq L_1.
\end{eqnarray*}
Then for every $\kappa_1,\kappa_2>0$ such that
\begin{eqnarray*}
\kappa_1+\kappa_2<1,
\end{eqnarray*}
we have:
\begin{equation} \label{eqn:big}
Sep(\mu_2,\kappa_1,\kappa_2)\geq Sep(\mu_1,\kappa_1,\kappa_2).
\end{equation}
\end{itemize}
\end{lemma}

\begin{proof}
\begin{itemize}
\item For the proof of parts $1$ to $3$, one may consult \cite{memwst}, \cite{growst} and \cite{vil}.
\item The existence and connectivity of the intervals which realize the separation distance is obtained from a classical compacity argument followed by the concavity properties of the $\sin^n$-concave functions.
\item For the last part, let 
\begin{eqnarray*}
0<\varepsilon\leq \delta.
\end{eqnarray*}
We use Lemma \ref{gr} to obtain:
\begin{eqnarray*}
\frac{\displaystyle\int_{0}^{\varepsilon}f(t)dt}{\displaystyle\int_{0}^{L_1}f(t)dt}\geq \frac{\displaystyle\int_{0}^{\delta}f(t)dt}{\displaystyle\int_{0}^{L_2}f(t)dt}.
\end{eqnarray*}
By using the above inequality the proof of (\ref{eqn:big}) follows.
\end{itemize}
\end{proof}

The general isoperimetric property of $CD(n-1,n)$-needles is presented in the following:

\begin{proposition} \label{supra}
Let $\kappa_1,\kappa_2>0$ be such that
\begin{eqnarray*}
\kappa_1+\kappa_2<1.
\end{eqnarray*}
Let $\mu=f(t)dt$ be a probability measure where $f$ is a $\sin^n$-concave function and $\mu$ is supported on the interval $I\subset\mathbb{R}$. Then
\begin{eqnarray*}
Sep(\mu,\kappa_1,\kappa_2)\leq Sep(C\sin(t)^n dt,\kappa_1,\kappa_2),
\end{eqnarray*}
where $C$ is the normalizing constant and the probability measure $C\sin(t)^ndt$ is supported on the interval $(0,\pi)$.
\end{proposition}
\begin{proof}
From Lemma \ref{comp}, we know that the length of the support of the measure $\mu$ is at most equal to $\pi$. Therefore, we \emph{translate} (\emph{i.e} change of variable $t\to t+a$) the function $f$ such that the point on which the (\emph{unique}) maximum is attained, coincides with the point $\pi/2$ on which the maximum of $\sin(t)^n$ is attained. Let $\varepsilon>0$ be such that:
\begin{eqnarray*}
\frac{\displaystyle\int_{t_1}^{t_2}\sin(t)^ndt}{\displaystyle\int_{0}^{\pi} \sin(t)^n dt}=1-(\kappa_1+\kappa_2),
\end{eqnarray*}
where $t_1=\pi/2-\varepsilon$ and $t_2=\pi/2+\varepsilon$.

From Lemma \ref{comp} namely equation (\ref{eqn:jun}) (when one takes $k=0$ in this formula), we have:
\begin{eqnarray*}
1-(\kappa_1+\kappa_2) &=&\frac{\displaystyle\int_{t_1}^{t_2}\sin(t)^ndt}{\displaystyle\int_{0}^{\pi}\sin(t)^n dt} \\
                    &\leq& \frac{\displaystyle\int_{t_1}^{t_2}f(t+a)dt}{\displaystyle\int_{0}^{\pi} f(t+a)dt}.
\end{eqnarray*}
Hence, there exists a $\varepsilon_1\leq \varepsilon$ for which we have
\begin{eqnarray*}
\frac{\displaystyle\int_{s_1}^{s_2}f(t+a)dt}{\displaystyle\int_{0}^{\pi} f(t+a)dt}= 1-(\kappa_1+\kappa_2),
\end{eqnarray*}
where $s_1=\pi/2-\varepsilon_1$ and $s_2=\pi/2+\varepsilon_1$.

This proves that:
\begin{eqnarray*}
Sep(\mu,\kappa_1,\kappa_2)\leq Sep(C\sin(t)^ndt,\kappa_1,\kappa_2).
\end{eqnarray*}
This  ends the proof of the proposition.
\end{proof}

\subsection{Recollection on Spherical Isoperimetric Problem and the Levy-Gromov Isoperimetric inequality}

With the tools presented in the previous sections, namely Proposition \ref{impoo} and Proposition \ref{supra}, we can already recover the sharp isoperimetric inequality on the canonical sphere as well as the Levy-Gromov Theorem. This is explained in \cite{kl} and following Klartag's work, we present these two results here. One could enjoy the powerful localization method by observing how easily one obtains these isoperimetric inequalities. It is interesting to compare the following proofs with the original proofs in which more sophisticated ideas are used.

We start with the canonical sphere $\mathbb{S}^n$. The solution of the isoperimetric inequality on the sphere has been known for many years. Similar to the Euclidean counter-part, there are several different proofs for this result. The proof which uses localization is presented in \cite{gromil}. We recall this proof here:
\begin{theorem}[Isoperimetry on the Sphere] \label{isosphere}
Let $\mathbb{S}^n$ be the canonical sphere. Let $vol_n$ denotes the (canonical) Riemannian volume. For every open set $A$ and for every $\varepsilon>0$ we have:
\begin{eqnarray*}
vol_n(A+\varepsilon)\geq vol_n(B+\varepsilon),
\end{eqnarray*}
where $B$ is a spherical ball with the same volume as $A$.
\end{theorem}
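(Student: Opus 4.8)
The plan is to deduce Theorem~\ref{isosphere} from Proposition~\ref{impoo} by identifying exactly which subsets realise the needle separation distance $N(v,w)$ on the canonical sphere, and verifying that they are spherical balls (caps). First I would normalise: replace $vol$ by the normalised measure $\mu$, set $v = \mu(A)$, and for a given $\varepsilon > 0$ put $w = w(A) = 1 - \mu(A+\varepsilon)$. By Proposition~\ref{impoo}, if we can exhibit a pair $(B, B_1)$ with $\mu(B) = v$, $\mu(B_1) = w$ realising $N(v,w)$, then automatically $\mu(A+\varepsilon) \geq \mu(B+\varepsilon)$, which is the desired inequality once $B$ is shown to be a cap.

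The core step is therefore to compute $N(v,w)$ on $\mathbb{S}^n$ and locate the extremal configuration. The key geometric fact is that on $\mathbb{S}^n$ every maximal geodesic, equipped with the measure $C\cos(t)^{n-1}\,dt$ on $[-\pi/2,\pi/2]$, is a $1$-needle (the first Example in the excerpt), and conversely this is the \emph{extremal} needle among all $\sin^{N}$-concave needles with $N \geq n-1$: among measures of the form $(C_1\sin t + C_2\cos t)^N dt$, the separation distance between two sub-intervals of prescribed masses is maximised by the symmetric choice $C\cos^{n-1}$, because a $\sin^N$-concave density is log-concave-like after the appropriate reparametrisation and pushing mass toward the two ends (as two end-intervals $[-\pi/2, -\pi/2+a]$ and $[\pi/2 - b, \pi/2]$) only increases the gap. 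This reduces the computation of $N(v,w)$ to a one-variable calculus problem on the single needle $([-\pi/2,\pi/2], C\cos^{n-1}t\,dt)$: choosing the two sub-intervals of masses $v$ and $w$ at the two ends and reading off the distance between them. One then checks that when this needle is realised as a maximal geodesic on $\mathbb{S}^n$, the two end-intervals are precisely the intersections of that geodesic with two antipodal spherical caps of masses $v$ and $w$; hence the caps $B$ (mass $v$) and $B_1$ (mass $w$) realise $N(v,w)$ on $\mathbb{S}^n$ itself. Feeding this $B$ into Proposition~\ref{impoo} gives $\mu(A+\varepsilon) \geq \mu(B+\varepsilon)$ with $B$ a cap of the same volume as $A$, which is exactly Theorem~\ref{isosphere}.

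The main obstacle I expect is the extremality claim for the needle: showing rigorously that, over \emph{all} $1$-needles $(I,\nu)$ with $\mathrm{length}(I)\le\pi$ and all placements of sub-intervals of given masses, the supremum defining $N(v,w)$ is attained by the symmetric $\cos^{n-1}$ needle with the two sub-intervals at the ends. This splits into two sub-claims: (i) for a \emph{fixed} needle, the separation distance between two sets of prescribed masses is maximised when those sets are intervals abutting the two endpoints (a rearrangement/bathtub argument using that the density is continuous and the masses are fixed — moving mass outward never decreases the gap); and (ii) among all admissible densities, the $\cos^{n-1}$ profile on the full interval $[-\pi/2,\pi/2]$ is the worst case (equivalently, gives the largest end-gap for fixed end-masses) — this is where the $\sin^N$-concavity inequality~(\ref{eqn:ohtz2}) is used, essentially as a comparison principle forcing the density to be no flatter than $\cos^{n-1}$ near the ends. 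Once (i) and (ii) are in place, the remaining verification — that the extremal needle's end-intervals correspond to antipodal caps on $\mathbb{S}^n$ — is a direct computation with the polar coordinate volume element $\cos^{n-1}t$, and the conclusion follows. I would also note that this argument simultaneously yields the sharpness (equality case) of Theorem~\ref{isosphere}, since caps are genuinely realised on the sphere, not merely on an abstract needle.
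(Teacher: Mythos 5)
Your proposal follows essentially the same route as the paper: apply Theorem~\ref{main} together with Proposition~\ref{impoo}, reduce the computation of $N(v,w)$ to the extremal needle $([-\pi/2,\pi/2],\,C\cos^{n-1}t\,dt)$ (your sub-claims (i) and (ii) are precisely the content of the paper's Lemma~\ref{sinn}, proved via Lemma~\ref{comp}), and then observe that antipodal spherical caps realise this needle separation distance, so $B$ may be taken to be a cap. The argument is correct in outline and matches the paper's proof, including the identification of the extremality of the $\cos^{n-1}$ profile as the main technical point.
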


\begin{proof}

Let $0<\kappa_1<1$ and $\varepsilon>0$ (small enough) be given such that $\kappa_1+\varepsilon< 1$. Let $\kappa_2=1-(\kappa_1+\varepsilon)$. We apply Theorem \ref{main} which gives us:
\begin{eqnarray*}
Sep(\mathbb{S}^n,\kappa_1,\kappa_2)\leq N_{\mathbb{S}^n}(\kappa_1,\kappa_2).
\end{eqnarray*}
Let us now investigate the right-hand side of the above inequality, \emph{i.e.} $N_{\mathbb{S}^n}(\kappa_1,\kappa_2)$. Since we are dealing with needles upon which the density of the probability measures are $\sin^{n-1}$-concave functions, proposition \ref{supra} explicitly gives: 
\begin{eqnarray*}
N_{\mathbb{S}^n}(\kappa_1,\kappa_2)=Sep(C\sin(t)^{n-1}dt, \kappa_1,\kappa_2),
\end{eqnarray*}
where $C$ is the normalizing constant.

On the other hand, we know that $N_{\mathbb{S}^n}(\kappa_1,\kappa_2)$ can be realized from two spherical balls $B_1$ and $B_2$ where $\mu(B_i)=\kappa_i$ with the balls being centered at opposite points. Indeed, the following formula holds for the \emph{measure} (normalized volume) of a ball of radius $r$ (centered at a point $x\in\mathbb{S}^n$) on the canonical sphere:
\begin{eqnarray*}
\frac{vol_n(B(x,r))}{vol_n(\mathbb{S}^n)}=\frac{\displaystyle\int_{0}^{r}\sin(t)^{n-1}dt}{\displaystyle\int_{0}^{\pi/2}\sin(t)^{n-1}dt}.
\end{eqnarray*}
We chose $r>0$ such that the right hand side in the equation above is equal to $\kappa_1$. The proof of the theorem follows by applying proposition \ref{impoo}.
\end{proof}

Here, we present the Levy-Gromov isoperimetric inequality:

\begin{theorem}[Levy-Gromov]
Let $M$ be a compact Riemannian manifold of dimension $n$ such that $Ricci(M)\geq Ricci(\mathbb{S}^n)$. We denote the Riemannian volume on $M$ and on $\mathbb{S}^n$ by $vol_n$. Let $A\subset M$ be an open subset of $M$ and let $B\subset \mathbb{S}^n$ be a ball on the sphere such that
\begin{eqnarray*}
\frac{vol_n(A)}{vol_n(M)}=\frac{vol_n(B)}{vol_n(\mathbb{S}^n)}.
\end{eqnarray*}
For every $\varepsilon>0$ we have:
\begin{eqnarray*}
\frac{vol_n(A+\varepsilon)}{vol_n(M)}\geq \frac{vol_n(B+\varepsilon)}{vol_n(\mathbb{S}^n)}.
\end{eqnarray*}
\end{theorem}
\begin{proof}
The proof is similar to the proof of Theorem \ref{isosphere}.

Let $\kappa_1,\kappa_2,\varepsilon$ be defined as in the proof of Theorem \ref{isosphere}. We apply Theorem \ref{main} which gives us:
\begin{eqnarray*}
Sep(M,\kappa_1,\kappa_2)\leq N_{M}(\kappa_1,\kappa_2).
\end{eqnarray*}
For any needle $\nu$ on $M$, the measure $\nu$ has density which is $\sin^{n-1}$-concave function. Moreover, the comparison of the diameter of $M$ and the one of $\mathbb{S}^n$ shows that the length of the support of $\nu$ is at most equal to $\pi$. Hence, we can use the result of proposition \ref{supra} and obtain that
\begin{eqnarray*}
N_{M}(\kappa_1,\kappa_2)\leq Sep(C\sin(t)^{n-1}dt, \kappa_1,\kappa_2),
\end{eqnarray*}
where $C$ is the normalizing constant.

The rest follows as in the proof of Theorem \ref{isosphere} and the proof of theorem follows.
\end{proof}

\begin{remark}
The power of Theorem \ref{main} lies in the fact that in order to obtain solution(s) to the isoperimetric problem, it is sufficient to find subsets which realize the \emph{optimal} needle (candidate) separation distance. For the case of the canonical sphere, this becomes straightforward thanks to proposition \ref{supra}. Other \emph{interesting} examples will follow in the next sections.
\end{remark}

\section{Sharp Isoperimetric Inequalities on Compact Rank One Symmetric Spaces}

The isoperimetric problem, even on a highly symmetric manifold, is usually very hard to solve. For instance, the isoperimetric inequality on (real) projective spaces was only solved up to dimension $3$ (see \cite{ros},\cite{rosr}). By applying our Theorem \ref{main}, we are able to solve the isoperimetric problem for the real, complex and quaternionic projective spaces in every dimension.

\begin{theorem}\label{realp}
Let $M$ be the canonical real projective space $\mathbb{R}P^n$. Let $vol_n$ denote the canonical Riemannian volume of $\mathbb{R}P^n$. Let $A$ be an open subset of $\mathbb{R}P^n$. For every $\varepsilon>0$ we have:
\begin{eqnarray*}
vol_n(A+\varepsilon)\geq vol_n(B+\varepsilon),
\end{eqnarray*}
where $B$ is 
\begin{itemize}
\item either an intrinsic ball with volume $=vol_n(A)$.
\item or a tube around a totally geodesic $\mathbb{R}P^k\subset\mathbb{R}P^n$ for a $1\leq k \leq n-1$ with volume $=vol_n(A)$.
\end{itemize}
\end{theorem}

\begin{theorem}\label{complexpro}
Let $M$ be the canonical complex projective space $\mathbb{C}P^n$. The (canonical) metric is the Fubini-Study metric with sectional curvature $ 1\leq Sec(M)\leq 4$. The volume element associated with the canonical Riemannian metric is denoted by $vol_{2n}$. Let $A$ be an open subset of $M$. For every $\varepsilon>0$ we have:
\begin{eqnarray*}
vol_{2n}(A+\varepsilon)\geq vol_{2n}(B+\varepsilon),
\end{eqnarray*}
where $B$ is either
\begin{itemize}
\item an intrinsic ball with volume equal to $vol_{2n}(A)$,
\item or a tube around a $\mathbb{C}P^k\subset\mathbb{C}P^n$ with volume equal to $vol_{2n}(A)$.
\end{itemize}
\end{theorem} 

\begin{theorem}\label{quaterpro}
Let $M$ be the canonical quaternionic projective space $\mathbb{H}P^n$. The (canonical) metric is the Fubini-Study metric with sectional curvature 
\begin{eqnarray*}
1\leq Sec(M)\leq 4.
\end{eqnarray*}
The volume element associated with the canonical Riemannian metric is denoted by $vol_{4n}$. Let $A$ be an open subset of $M$. For every $\varepsilon>0$ we have:
\begin{eqnarray*}
vol_{4n}(A+\varepsilon)\geq vol_{4n}(B+\varepsilon),
\end{eqnarray*}
where $B$ is either
\begin{itemize}
\item an intrinsic ball with volume equal to $vol_{4n}(A)$,
\item or a tube around a $\mathbb{H}P^k\subset \mathbb{H}P^n$ with volume equal to $vol_{4n}(A)$.
\end{itemize}
\end{theorem}

\begin{remark}
\begin{itemize}
\item In \cite{ros}, \cite{rosr}, the authors show that the isoperimetric solution on $\mathbb{R}P^3$ are given by geodesic balls or tubes around \emph{geodesics}. For different values of the volume, the isoperimetric solution (region) changes: for small or big volumes the solutions are the geodesic balls, while for the volumes around half of the total volume of $\mathbb{R}P^3$ the solution of the isoperimetric problem is given by a tube around $\mathbb{R}P^1$. 
\item As pointed out to us by a referee, in \cite{newref}, it is shown that for the complex projective space, some tubes around some $\mathbb{C}P^k\subset\mathbb{C}P^n$ which have the same volume of the geodesic ball in $\mathbb{C}P^n$ have smaller volume of their $\varepsilon$-neighborhood compared to the volume of $\varepsilon$-neighborhood of the geodesic ball. This already indicated the delicacy of the isoperimetric regions in the case of real, complex and quaternionic projective spaces. 
\end{itemize}
\end{remark}

\subsection{Strategy of Proof of Theorems \ref{realp}, \ref{complexpro} and \ref{quaterpro}}
The strategy of the proof is to first show that for every $\kappa_1,\kappa_2>0$ such that 
\begin{eqnarray*}
\kappa_1+\kappa_2<1,
\end{eqnarray*}
The following holds.
\begin{itemize}
\item For $\mathbb{R}P^n$, there exists a $k\in\{0,\cdots,n-1\}$ such that:
\begin{eqnarray*}
N_{\mathbb{R}P^n}(\kappa_1,\kappa_2)=Sep(C_k\sin(t)^{n-k-1}\cos(t)^k dt,\kappa_1,\kappa_2),
\end{eqnarray*}
where $C_k$ is the normalizing constant and the support is $(0,\pi/2)$.
\item For $\mathbb{C}P^n$, there exists a $k\in\{0,\cdots,n-1\}$ such that: 
\begin{eqnarray*}
N_{\mathbb{C}P^n}(\kappa_1,\kappa_2)=Sep(C_k\sin(t)^{2n-2k-1}\cos(t)^{2k+1}dt,\kappa_1,\kappa_2),
\end{eqnarray*}
where $C_k$ is the normalizing constant and the support is $(0,\pi/2)$.
\item For $\mathbb{H}P^n$, there exists a $k\in\{0,\cdots,n-1\}$ such that:
\begin{eqnarray*}
N_{\mathbb{H}P^n}(\kappa_1,\kappa_2)=Sep(C_k\sin(t)^{4n-4k-1}\cos(t)^{4k+3} dt,\kappa_1,\kappa_2),
\end{eqnarray*}
where $C_k$ is the normalizing constant and the support is $(0,\pi/2)$.
\end{itemize}

Once this is proven, it is sufficient to apply Proposition \ref{impoo} and find those subsets which realize the needle (candidate) separation distances.

The proof relies on the specific geometry of each of the above spaces in consideration. Therefore, we treat each space separately and present a proof for each case. We start with the real projective space, then treat the complex and quaternionic projective spaces.

\section{The proof of Theorem \ref{realp} for $\mathbb{R}P^n$}

The study of isoperimetric inequality on $\mathbb{R}P^n$ using the needles is different from the case of the round sphere even though the canonical $\mathbb{R}P^n$ is of constant sectional curvature $1$. Since minimizing geodesics on $\mathbb{R}P^n$ have length at most equal to $\pi/2$, (almost) the \emph{only} (but crucial) difference  for needles on $\mathbb{R}P^n$ from the needles defined on the canonical sphere $\mathbb{S}^n$ is the maximum length of their support. Hence, we are \emph{restricted} to study $\sin^{n-1}$-concave measures on intervals with maximum length $\pi/2$.

This difference has a big impact on the result when studying separation distance of needles in $\mathbb{R}P^n$. To get a feeling about this difference, we observe that on the sphere there is only \emph{one} needle with maximal support which is the needle $C\sin(t)^ndt$, however on the real projective space, there are several needles of maximal support: $C_k\sin(t)^k\cos(t)^{n-k}dt$. This fact plays a crucial role for the study of isoperimetric inequality of $\mathbb{R}P^n$ using the needle decomposition.

In order to prove Theorem \ref{realp}, we need two propositions, one purely analytic and the other with geometrical flavor. We start with the required analytic  result:

\begin{proposition} \label{proreal}
Let  $n>1$. Let
\begin{eqnarray*}
f(t)=C_1 f_1(t)f_2(t),
\end{eqnarray*}
where
\begin{eqnarray*}
f_1(t)&=& \cos(t+c_1)\cdots \cos(t+c_p) \\
f_2(t)&=& \cos(t+c_{p+1})\cdots \cos(t+c_{p+k}).
\end{eqnarray*}
Where $C_1>0$ and
\begin{eqnarray*}
p+k=n-1.
\end{eqnarray*}
For $i\in\{1,\cdots p\}$ we have $c_i\geq 0$, and for $j\in\{p+1\cdots,p+k\}$ we have $c_j<0$.
We assume $\mu_1=f(t)dt$ is a probability measure on $[0,L]$ where $L\leq \pi/2$ and where $f(L)=0$. Moreover, if $f(0)\neq 0$ then $k=0$.
Let 
\begin{eqnarray*}
g(t)=C_2\sin(t)^k\cos(t)^p.
\end{eqnarray*}
We assume $\mu_2=g(t)dt$ is a probability measure on $[0,\pi/2]$. 

Then for every $\kappa_1,\kappa_2>0$ such that
\begin{eqnarray*}
\kappa_1+\kappa_2<1,
\end{eqnarray*}
we have:
\begin{eqnarray*}
Sep(\mu_2,\kappa_1,\kappa_2)\geq Sep(\mu_1,\kappa_1,\kappa_2).
\end{eqnarray*}
\end{proposition}

\begin{proof}

When $k=0$, the proof is similar to the proof of Proposition \ref{supra}. We assume then $k\neq 0$ for which case the proof is more involved. 

We start by the following:
\begin{lemma} \label{firststep}
Let $t_1\in[0,L]$ (resp. $t_2\in[0,\pi/2]$) be the point where $f$ (resp. $g$) attains its maximum. Then 
\begin{eqnarray*}
t_1\leq t_2.
\end{eqnarray*}
\end{lemma} 
\begin{proof}
We have:
\begin{eqnarray*}
\frac{f(t)'}{f(t)}&=&-\sum_{i=1}^{p+k}\tan(t+c_i) \\
                      &\leq&-p\tan(t)+k\cot(t)\\
                      &=&\frac{g(t)'}{g(t)}.
\end{eqnarray*}

Since we have $f,g\geq 0$ and since $t_1$ is such that $f(t_1)'=0$ then from the above inequality we deduce that at point $t_1$, $g(t_1)'\geq 0$, which means in the interval $(0,t_1)$, the function $g$ is increasing. Therefore, we have:
\begin{eqnarray*}
t_1\leq t_2.
\end{eqnarray*}
This ends the proof of this Lemma.
\end{proof} 

Let $c$ be the maximum of $\{c_1,\cdots,c_p\}$.

We shift the function $f$ by $c$ to the \emph{right}, \emph{i.e.} the transformation $t\to t-c$. We define then 
\begin{eqnarray*}
F(t)=f(t-c),
\end{eqnarray*}
and we have:
\begin{eqnarray*}
F(t)=C_1\cos(t+d_1)\cdots\cos(t+d_{p+k}),
\end{eqnarray*}
where for every $i\in\{1,\cdots,p+k\}$ we have 
\begin{eqnarray*}
d_i=c_i-c.
\end{eqnarray*}
Clearly, $F(\pi/2)=0$ and the number of those $d_i<0$ is at least equal to $k$.

The function $F$ is the function $f$ shifted by $c$ in the direction of positive axis. Since we studied the function $f$ on the interval $[0,L]$, we study the function $F$ on $[c,\pi/2]$.

The following Lemma is the counterpart of Lemma \ref{firststep}:
\begin{lemma}\label{secondstep}
Let $t_1\in[c,\pi/2]$ (resp. $t_2\in[0,\pi/2]$) be the point where $F$ (resp. $g$) attains its maximum. Then 
\begin{eqnarray*}
t_1\geq t_2.
\end{eqnarray*}
\end{lemma} 
\begin{proof}
The Proof of this Lemma is similar to the proof of Lemma \ref{firststep}. We have:

\begin{eqnarray*}
\frac{F(t)'}{F(t)}&=&-\sum_{i=1}^{p+k}\tan(t+d_i) \\
                      &\geq&-p\tan(t)+k\cot(t)\\
                      &=&\frac{g(t)'}{g(t)}.
\end{eqnarray*}

Since on the interval $[c,\pi/2]$ we have $F\geq 0$ and since $t_1$ is such that $F(t_1)'=0$ then from the above inequality we deduce that $g(t_1)'\leq 0$. Therefore, on $(t_1,\pi/2)$ the function $g$ is decreasing. Hence we have:
\begin{eqnarray*}
t_1\geq t_2.
\end{eqnarray*}
This ends the proof of this Lemma.
\end{proof}

According to Lemma \ref{firststep}, we have $t_1\leq t_2$. We translate the function $f$ by $t_2-t_1$ to the right, \emph{i.e.} we perform the transformation $t\to t-(t_2-t_1)$. Let the translated function be denoted by $F_m$. According to Lemma \ref{secondstep}, we have 
\begin{eqnarray*}
[t_2-t_1,L+(t_2-t_1)]\subset [0,\pi/2].
\end{eqnarray*}
Moreover, the maximum points of $F_m$ and $g$ coincide. Recall that $Fdt$ is a probability measure on $[c,\pi/2]$ hence $\mu_m=F_mdt$ is a probability measure on 
\begin{eqnarray*} 
[t_2-t_2,L+(t_2-t_2)].
\end{eqnarray*}
Moreover, both $F_m$ and $g$ are increasing from $(t_2-t_1,t_2)$ and decreasing on $(t_2,L+(t_2-t_1))$. Hence we have:
\begin{eqnarray*}
F_m(t_2)\geq g(t_2).
\end{eqnarray*}
Indeed otherwise, the function $F_m$ would be strictly smaller than $g$ which is not possible since both $g(t)dt$ and $F_m(t)dt$ are probability measures.

We are set to finalize the proof of proposition \ref{proreal}.

Let $\kappa_1,\kappa_2>0$ with 
\begin{eqnarray*}
\kappa_1+\kappa_2<1,
\end{eqnarray*}
be given. Recall $\mu_1=f(t)dt$.

It is clear that:
\begin{eqnarray*}
Sep((\mu_1,[0,L]),\kappa_1,\kappa_2)=Sep((\mu_{m},[t_2-t_1,L+(t_2-t_1)]),\kappa_1,\kappa_2).
\end{eqnarray*}
Indeed, assume the sets $U_1,U_2\subset [0,T]$ where
\begin{eqnarray*}
\mu_1(U_i)=\kappa_i,
\end{eqnarray*}
for $i=1,2$ realize the separation distance $Sep(\mu_1,\kappa_1,\kappa_2)$ which we know from Lemma \ref{comp} that are connected intervals. Then for $i=1,2$, the sets 
\begin{eqnarray*}
W_i=U_i+(t_2-t_1)
\end{eqnarray*}
realize the separation distance $Sep(\mu_{m},\kappa_1,\kappa_2)$.


For $t\in (t_2-t_1,L+(t_2-t_1))$, consider the function:
\begin{eqnarray*}
q(t)=\mu_2((0,t))-\mu_{m}((t_2-t_1,t)).
\end{eqnarray*}
It is clear that $q$ is a continuous function and it is clear that for $t$ small enough we have 
\begin{eqnarray*}
q(t)>0,
\end{eqnarray*}
and for $t$ large enough we have:
\begin{eqnarray*}
q(t)<0.
\end{eqnarray*}
Indeed, for $t$ small enough we have $g(t)\geq F_m(t)$ hence $\mu_2(0,t)\geq \mu_m(t_2-t_1,t)$. And for $t=L+(t_2-t_1)$, we have $\mu_{m}((t_2-t_1,t))=1$ and $\mu_2((0,t))<1$.

Therefore, there exists a $t_0\in (t_2-t_1,L+(t_2-t_1))$ such that:
\begin{eqnarray*}
q(t_0)=0.
\end{eqnarray*}
The above translates to the fact that:
\begin{eqnarray*}
\mu_{m}(t_2-t_1,t_0)&=&\mu_2(0,t_0)\\
              &=& \kappa,
\end{eqnarray*}

for some $\kappa\in(0,1)$.

We have different possibilities for the values of $\kappa_1$ and $\kappa_2$ summarized as follows and treated separately:
\begin{itemize}
\item We have $\kappa_1\leq \kappa$ and $\kappa_2\leq 1-\kappa$.
\item We have $\kappa_1\leq \kappa$ and $\kappa_2\geq 1-\kappa$.
\item We have $\kappa_1\geq \kappa$ and $\kappa_2\leq 1-\kappa$.
\end{itemize}

For the first case, where $\kappa_1\leq \kappa$ and $\kappa_2\leq 1-\kappa$, by the properties of the function $q$ we have:
\begin{eqnarray*}
\mu_m(W_1)&\leq& \mu_2(W_1) \\
\mu_m(W_2)&\leq& \mu_2(W_2).
\end{eqnarray*}
Therefore, for $F_1,F_2\subset [0,\pi/2]$ such that:
\begin{eqnarray*}
\mu_2(F_1)&=&\kappa_1 \\
\mu_2(F_2)&=&\kappa_2,
\end{eqnarray*}
we have
\begin{eqnarray*}
d(F_1,F_2)\geq d(W_1,W_2),
\end{eqnarray*}
where we refer to definition \ref{defneedle} for the definition of the distance $d(.,.)$ between the intervals.

We now treat the second case, where $\kappa_1\leq \kappa$ and $\kappa_2\geq 1-\kappa$. 

Let $F_1=(0,u_1)\subset (0,\pi/2)$ be the interval for which we have
\begin{eqnarray*}
\mu_2(F_1)=\kappa_1.
\end{eqnarray*}
Let $W_1=(t_2-t_1,w_1)$ such that:
\begin{eqnarray*}
\mu_m(W_1)=\kappa_1.
\end{eqnarray*}

Since $\kappa_1\leq \kappa$, we have:
\begin{eqnarray*}
u_1\leq w_1.
\end{eqnarray*}
Let
\begin{eqnarray*}
W_2&=&(w_2,L+(t_2-t_1)) \\
F_2&=&(u_2,\pi/2),
\end{eqnarray*}
such that, we have:
\begin{eqnarray*}
\mu_2(F_2)&=& \mu_m(W_2)\\
                  &=& \kappa_2.
\end{eqnarray*}


Suppose the following inequality holds:
\begin{equation} \label{funny}
\vert u_1-w_1\vert\leq \vert u_2-w_2\vert.
\end{equation}

It is clear that we have:
\begin{equation} \label{ira}
\mu_2(u_1,u_2)=\mu_m(w_1,w_2).
\end{equation}
From inequality (\ref{funny}), we assumed the length of the interval $(u_1,u_2)$ be larger than the length of the interval $(w_1,w_2)$. Further, the function $F_m$ is larger than $g$ on the interval $(w_1,w_2)$ which contradicts equation (\ref{ira}). Therefore, we have:
\begin{eqnarray*}
d(F_1,F_2)\geq d(W_1,W_2).
\end{eqnarray*}

We treat now the last case for $\kappa_1,\kappa_2$ where we have $\kappa_1\geq \kappa$ and $\kappa_2\leq 1-\kappa$. The proof is similar to the previous case. 

Let $F_1=(0,u_1)\subset (0,\pi/2)$ be the interval for which we have
\begin{eqnarray*}
\mu_2(F_1)=\kappa_1.
\end{eqnarray*}
Let $W_1=(t_2-t_1,w_1)$ such that:
\begin{eqnarray*}
\mu_m(W_1)=\kappa_1.
\end{eqnarray*}

Since $\kappa_1\geq \kappa$, we have:
\begin{eqnarray*}
u_1\geq w_1.
\end{eqnarray*}
Let
\begin{eqnarray*}
W_2&=&(t_2-t_1,w_2) \\
F_2&=&(0,u_2),
\end{eqnarray*}
such that we have:
\begin{eqnarray*}
\mu_2(F_2)&=& \mu_m(W_2)\\
                  &=& 1-\kappa_2.
\end{eqnarray*}

Suppose we have:
\begin{eqnarray*}
\vert u_1-w_1\vert\geq \vert u_2-w_2\vert,
\end{eqnarray*}

It is clear that we have:
\begin{equation} \label{iraa}
\mu_2(u_1,u_2)=\mu_m(w_1,w_2).
\end{equation}
We assumed the length of the interval $(u_1,u_2)$ be larger than the length of the interval $(w_1,w_2)$. Further, the function $F_m$ is larger than $g$ on the interval $(w_1,w_2)$. Therefore, it contradicts equation (\ref{iraa}). Therefore, we have:
\begin{eqnarray*}
d(F_1,F_2)\geq d(W_1,W_2).
\end{eqnarray*}

We have demonstrated that for every $\kappa_1, \kappa_2>0$ such that
\begin{eqnarray*}
\kappa_1+\kappa_2<1,
\end{eqnarray*}
we have
\begin{equation} \label{pfu}
d(F_1,F_2)\geq d(W_1,W_2).
\end{equation}
According to Lemma \ref{comp} we know that the separation distance is realized by connected intervals, and using the inequality (\ref{pfu}) we deduce that:
\begin{eqnarray*}
Sep(\mu_2,\kappa_1,\kappa_2) &\geq& d(F_1,F_2) \\ 
                             &\geq& d(W_1,W_2)\\
                             &=&   Sep(\mu_{m},\kappa_1,\kappa_2).
\end{eqnarray*}

The proof of Proposition \ref{proreal} is completed.

\end{proof}

\begin{remark}
In the proof of Proposition \ref{proreal}, we showed that the support of the measure $\mu_{m}$ is strictly included in the support of the measure $\mu_2$ which is the interval $(0,\pi/2)$. Then it was easy to demonstrate that the separation distance of $\mu_2$ is larger than the separation distance of the measure $\mu_{m}$. The smaller the support of a measure is, the larger the maximum point would be and the measure is \emph{more} concentrated around its maximum point which makes the separation distance less than a measure with larger support. This idea was already known and is explained in \cite{grobook} (pages $154-155$).
\end{remark}

Having proved the analytic result of proposition (\ref{proreal}), we present a result which has to do with the geometry of $\mathbb{R}P^n$:
\begin{proposition} \label{candreal}
Let $n\geq 1$. Let $\nu$ be a needle candidate on $\mathbb{R}P^n$. Let $f$ be the density of the pull-back of $\nu$ on $A=(0,L)$ with $L\leq \pi/2$. Then
\begin{eqnarray*}
f(t)= C\:\Pi_{1}^{n-1}f_i(t),
\end{eqnarray*}
where $C\in\mathbb{R}$ and for every $i\in\{1,\cdots,n-1\}$ there exists $\beta_i\in\mathbb{R}$ such that
\begin{eqnarray*}
f_i(t)=\sin(t+\beta_i).
\end{eqnarray*}
\end{proposition}

\begin{proof}
Let the dimension $n\geq 1$ of $\mathbb{R}P^n$ be fixed. Based on definition \ref{nc} of the needle candidates we know there exists a family of Jacobi fields $J_1(t),\cdots J_{n-1}(t)$ along $\gamma(t)$ for $t\in A$. As in the definition \ref{nc}, we let $J_{n}=\gamma'$. We assume this family consists of linearly independent (tangent) vectors. Otherwise there is nothing to prove. 

There exists a constant $c>0$ and the density of the pull-back of the needle $\nu$, denoted by $f$, equals: 
\begin{eqnarray*}
f(t)= c\sqrt{\det{(<J_i(t),J_k(t)>)_{i,k=1,\cdots, n}}}.
\end{eqnarray*}

The Riemannian volume $vol_{n}$ of $\mathbb{R}P^n$ can be transported by the (inverse of the) exponential map on the tangent space at any point. By Linear algebra, we know that
\begin{eqnarray*}
\sqrt{\det{(<J_i(t),J_k(t)>)_{i,k=1,\cdots, n}}},
\end{eqnarray*}
is the determinant of the \emph{Gram} matrix associated to vectors $J_i(t)\in T_{\gamma(t)}\mathbb{R}P^n$ for $i=1,\cdots,n$. This quantity is equal to the  (Riemannian) volume of the parallelepiped generated by the linearly independent tangent vectors $\{J_i(t)\}_{i=1}^{n}$ in the tangent space $T_{\gamma(t)}\mathbb{R}P^n$. We denote by:
\begin{eqnarray*}
[J_1,\cdots,J_{n}],
\end{eqnarray*}
the parallelepiped, generated by the vectors $J_i$ ($i=1,\cdots,n$). For every $t\in A$, we denote (abusively) by $vol_{n}$ the Riemannian volume in the tangent space $T_{\gamma(t)}\mathbb{R}P^n$. By definition \ref{nc}, there exists a point $t\in A$ on which we have:
\begin{eqnarray*}
<J_i(t),\gamma'(t)>\: =\: <J'_i(t),\gamma'(t)>\: = 0
\end{eqnarray*}
for $i=1,\cdots, n-1$ and
\begin{eqnarray*}
<J'_i(t),J_k(t)>\:=\:<J'_k(t),J_i(t)>,
\end{eqnarray*}
for $i,k=1,\cdots,n-1$. An easy exercise concerning the Jacobi fields (see \cite{sakai},\cite{gallot}) asserts that if such equality holds for one point, then it holds on every point on $\gamma(t)$. Thus, the \emph{frame} of Jacobi fields $J_i$ are orthogonal to $\gamma'$ for every $t\in A$. Furthermore, without loss of generality we may assume that $\{J_i(t)\}$ form an orthogonal basis of the tangent space at $\gamma(t)$. Indeed, since for $i=1,\cdots,n-1$, the vectors  $\{J_i(t)\}$ are linearly independent, there exists a constant $(n-1)\times(n-1)$ matrix $A$ (along $\gamma(t)$ such that $\{AJ_i(t)\}$ are linearly independent and orthogonal Jacobi fields. 

We have the following result in Riemannian geometry:
\begin{proposition} \label{kr}
Let $M$ be a Riemannian manifold of dimension $n$ of constant sectional curvature $1$. Let $\sigma$ be a geodesic segment in $M$. Let $\{E_1,\cdots,E_n\}$ be orthogonal and parallel along $\sigma$ with $E_1$ be the unit tangent vector along $\sigma$. Then the general solution of a Jacobi field along $\sigma$ is given by:
\begin{eqnarray*}
\sum_i (a_i\sin(t)+b_i\cos(t))E_i.
\end{eqnarray*}
\end{proposition}


Therefore, by applying proposition \ref{kr} to the orthogonal Jacobi fields $\{AJ_i(t)\}$ the proof follows.

\end{proof}

\subsection{End Proof of Theorem \ref{realp}}

We quickly remind some known facts on Riemannian geometry:
\begin{definition} \label{focal}
Let $M$ be a closed Riemannian manifold of dimension $n$ and let $H\subset M$ be a submanifold of dimension $0<k<n$. Let $c:[a,b]\to M$ be a geodesic in $M$ such that:
\begin{itemize}
\item The geodesic $c$ is orthogonal to $H$.
\item $c(a)\in H$.
\item $c(a)'\in T_{c(a)}H^{\perp}$.
\end{itemize}
Then, $c(t_0)\in c(a,b]$ is called a \emph{focal} point of $H$ if there exists a non-trivial Jacobi field $J$ along $c$ with $J(a)\in T_{c(a)}H$ such that $J(t_0)=0$. Furthermore, the distance between $c(a)$ to $c(t_0)$ is called the \emph{focal} distance of $c(t_0)$ to $H$.
\end{definition}

We invite the reader to consult \cite{wilh} and \cite{herm} for some related topics on focal distances of submanifolds in $\mathbb{R}P^n$, $\mathbb{C}P^n$ or $\mathbb{H}P^n$. For our purpose, we need to know that the maximal focal distance of any submanifold in $\mathbb{R}P^n$ is equal to $\pi/2$.

Therefore, according to Lemma \ref{comp} and particularly inequality (\ref{eqn:big}), for the purpose of estimating the separation distance, it is enough to consider those needle candidates $\nu$ for which the density of their pull-back is defined on an interval $(0,L)$ with $L\leq \pi/2$ and is equal to $f(t)$ as it is defined in Proposition \ref{proreal}. Furthermore, according to this same proposition \ref{proreal} we deduce that needle candidates of the form 
\begin{eqnarray*}
g(t)=\sin(t)^{m_1}\cos(t)^{m_2},
\end{eqnarray*}
with $m_1,m_2\geq 0$ and $m_1+m_2=n-1$ maximize the needle separation distance.

Therefore, for $\kappa_1,\kappa_2>0$ such that
\begin{eqnarray*}
\kappa_1+\kappa_2<1,
\end{eqnarray*}
there exists a $k\in\{1\cdots,n\}$ such that:
\begin{eqnarray*}
N_{\mathbb{R}P^n}(\kappa_1,\kappa_2)=Sep(C_k\sin(t)^{n-k}\cos(t)^{k-1}dt,\kappa_1,\kappa_2),
\end{eqnarray*}

Therefore, according to Proposition \ref{impoo}, it remains to find those subsets of $\mathbb{R}P^n$ which realize the above (optimal) needle separation distance.

The \emph{measure} (normalized volume) of balls in $\mathbb{R}P^n$ is the same as the \emph{measure} of balls in $\mathbb{S}^n$. Indeed, for $0<r\leq\pi/2$, and $x\in\mathbb{R}P^n$:
\begin{eqnarray*}
\frac{vol_n(B(x,r))}{vol_n(\mathbb{R}P^n)}=\frac{\displaystyle\int_{0}^{r}\sin(t)^{n-1}dt}{\displaystyle\int_{0}^{\pi/2}\sin(t)^{n-1}dt}.
\end{eqnarray*}
The \emph{measure} (normalized volume) of a tube around a totally geodesic
\begin{eqnarray*}
\mathbb{R}P^{n-k}\subset\mathbb{R}P^n
\end{eqnarray*} 
is given by:
\begin{eqnarray*}
\frac{vol_n(\mathbb{R}P^{n-k}+r)}{vol_n(\mathbb{R}P^n)}=\frac{\displaystyle\int_{0}^{r}\sin(t)^{k-1}\cos(t)^{n-k}dt}{\displaystyle\int_{0}^{\pi/2}\sin(t)^{k-1}\cos(t)^{n-k}dt}.
\end{eqnarray*}
One could consult \cite{grey} for a proof of the above formula.

We apply the result of Proposition \ref{impoo} and Lemma \ref{proreal} and deduce that for every open set $U\subset\mathbb{R}P^n$ we have:
\begin{eqnarray*}
vol_n(U+\delta)\geq vol_n(B+\delta),
\end{eqnarray*}
where $B$ is either a ball or a tube around some $\mathbb{R}P^{k}$ where:
\begin{eqnarray*}
vol_n(U)=vol_n(B).
\end{eqnarray*}
This ends the proof of Theorem \ref{realp}.

\section{The proof of Theorem \ref{complexpro} for $\mathbb{C}P^n$}

We inherit $\mathbb{C}P^n$ with the canonical Riemannian structure and denote $vol_{2n}$ by the Riemannian volume. 

We start with an analytic result:

\begin{proposition} \label{procomp}
Let $n\geq 1$. Let
\begin{eqnarray*}
f(t)=C_1 f_1(t)f_2(t)f_3(t),
\end{eqnarray*}
where
\begin{eqnarray*}
f_1(t)&=&\cos(t+a_1)\cdots\cos(t+a_p)\\ 
f_2(t)&=&\cos(t+b_1)\cdots\cos(t+b_q)\\
f_3(t)&=&\sin(2t+c_1)\cdots \sin(2t+c_s).
\end{eqnarray*}
where $C_1>0$, $s\geq 1$ and
\begin{eqnarray*}
p+q+s=2n-1.
\end{eqnarray*}
Moreover for $i\in\{1,\cdots\ \max\{p,q,s\}\}$ we have $a_i, c_i\geq 0$ and $b_i<0$. We assume $\mu_1=f(t)dt$ is a probability measure on $[0,L]$ where $L\leq \pi/2$ and where 
\begin{eqnarray*}
f(0)=f(L)=0.
\end{eqnarray*}
Let 
\begin{eqnarray*}
g(t)=C_2\sin(t)^{m_1}\cos(t)^{m_2}\sin(2t)^{m_3},
\end{eqnarray*}
where 
\begin{eqnarray*}
m_1+m_2+m_3=2n-1,
\end{eqnarray*}
and 
\begin{eqnarray*}
m_2&\leq& p\\
q&\leq& m_1\\
s&\leq& m_3.
\end{eqnarray*}
Moreover we assume $\mu_2=g(t)dt$ is a probability measure on $[0,\pi/2]$. 

Then for every $\kappa_1,\kappa_2>0$ where
\begin{eqnarray*}
\kappa_1+\kappa_2<1,
\end{eqnarray*}
we have:
\begin{eqnarray*}
Sep(\mu_2,\kappa_1,\kappa_2)\geq Sep(\mu_1,\kappa_1,\kappa_2).
\end{eqnarray*}
\end{proposition}

\begin{proof}

The strategy of proof is similar to the proof of proposition \ref{proreal}.

We start by the following:
\begin{lemma} \label{compfirststep}
Let $t_1\in[0,L]$ (resp. $t_2\in[0,\pi/2]$) be the point where $f$ (resp. $g$) attains its maximum. Then 
\begin{eqnarray*}
t_1\leq t_2.
\end{eqnarray*}
\end{lemma} 
\begin{proof}
On the interval $[0,L]$ we have:
\begin{eqnarray*}
\frac{f(t)'}{f(t)}&=&-\sum_{i=1}^{p}\tan(t+a_i)-\sum_{i=1}^{q}\tan(t+b_i)+2\sum_{i=1}^{s}\cot(t+c_i)\\
                  &\leq&-m_2\tan(t)+m_1\cot(t)+m_3\cot(2t)\\
                  &=&\frac{g(t)'}{g(t)}.
\end{eqnarray*}


Since we have $f,g\geq 0$ and since $t_1$ is such that $f(t_1)'=0$ then from the above inequality we deduce that at point $t_1$, $g(t_1)'\geq 0$. Hence the function $g$ is increasing on the interval $(0,t_1)$. Therefore, we have:
\begin{eqnarray*}
t_1\leq t_2.
\end{eqnarray*}
This ends the proof of this Lemma.
\end{proof} 

Let $c$ be the maximum of $\{a_1,\cdots,a_p,\frac{c_1}{2}\cdots,\frac{c_s}{2}\}$.

We shift the function $f$ by $c$ \emph{i.e.} the transformation $t\to t-c$. We define then 
$F(t)=f(t-c)$ and we have:
\begin{eqnarray*}
F(t)=C_1 f_1(t-c)f_2(t-c)f_3(t-c),
\end{eqnarray*}

and we observe that $F(\pi/2)=0$.

The function $F$ is the function $f$ shifted by $c$ in the direction of positive axis. Since we studied the function $f$ on the interval $[0,L]$, we study the function $F$ on $[c,\pi/2]$.

The following Lemma is the counterpart of Lemma \ref{compfirststep}:
\begin{lemma}\label{compsecondstep}
Let $t_1\in[c,\pi/2]$ (resp. $t_2\in[0,\pi/2]$) be the point where $F$ (resp. $g$) attains its maximum. Then 
\begin{eqnarray*}
t_1\geq t_2.
\end{eqnarray*}
\end{lemma} 
\begin{proof}

We make the change of variable $t\to\pi/2-t$ and observe that:
\begin{eqnarray*}
g(\pi/2-t)=C_2\cos(t)^{m_1}\sin(t)^{m_2}\sin(2t)^{m_3},
\end{eqnarray*}
and 
\begin{eqnarray*}
F(\pi/2-t)&=&f(\pi/2-t-c)\\
          &=&C_1f_1(\pi/2-t-c)f_2(\pi/2-t-c)f_3(\pi/2-t-c),
\end{eqnarray*}
where 
\begin{eqnarray*}
f_1(\pi/2-t-c)&=&\cos(\pi/2-t+a_1-c)\cdots\cos(\pi/2-t+a_p-c)\\ 
f_2(\pi/2-t-c)&=&\cos(\pi/2-t+b_1-c)\cdots\cos(\pi/2-t+b_q-c)\\
f_3(\pi/2-t-c)&=&\sin(2t+c_1-2c)\cdots \sin(2t+c_s-2c).
\end{eqnarray*}

Since $a_i, c_i\geq 0$ and $b_i<0$ and $c$ is the maximum of $\{a_1,\cdots,a_p,c_1\cdots,c_s\}$, we have:
\begin{eqnarray*}
f_1(\pi/2-t-c)&=&\cos(t+d_1)\cdots\cos(t+d_p)\\ 
f_2(\pi/2-t-c)&=&\cos(t+h_1)\cdots\cos(t+h_q)\\
f_3(\pi/2-t-c)&=&\sin(2t+c_1-2c)\cdots \sin(2t+c_s-2c),
\end{eqnarray*}
where $d_i\leq 0$ and $h_i\geq 0$. Therefore, we apply Lemma \ref{compfirststep} to $F(\pi/2-t)$ and $g(\pi/2-t)$ and the proof follows.

\end{proof}

According to Lemmas (\ref{compfirststep}) and (\ref{compsecondstep}), the function $f(t-(t_2-t_1))$ obtains its maximum at the point $t_2$ which coincides with the point where $g$ attains its maximum. Moreover, we have:
\begin{eqnarray*}
[t_2-t_1,L+(t_2-t_1)]\subset [0,\pi/2].
\end{eqnarray*}
Hence, the support of $f(t-(t_2-t_1))$ is strictly contained in the support of $g$ and therefore:
\begin{eqnarray*}
f(t_2)\geq g(t_1).
\end{eqnarray*}
The rest of the proof is exactly similar to the proof of proposition (\ref{proreal}). Therefore:
\begin{eqnarray*}
Sep(\mu_2,\kappa_1,\kappa_2)\geq Sep(\mu_1,\kappa_1,\kappa_2).
\end{eqnarray*}

\end{proof}

\begin{remark}
In Proposition \ref{procomp} we could have replaced the function $f_1(t)f_2(t)f_3(t)$ by suitable functions of the form: 
\begin{eqnarray*}
\Pi_{i=1}^{p}\sin(\alpha_i t+\beta_i),
\end{eqnarray*}
where $\alpha_i\in[1,2]$ and $\beta_i\in\mathbb{R}$ such that $f_2(0)=f_2(L)=0$. The proof is indeed similar.
\end{remark}

Next result concerns the needle candidates on $\mathbb{C}P^n$ and the result will be decisive for the proof of the isoperimetric inequality on $\mathbb{C}P^n$.

\begin{proposition} \label{candcomp}
Let $n\geq 1$. Let $\nu$ be a needle candidate on $\mathbb{C}P^n$. Let $f$ be the density function of the pull-back of $\nu$ on an interval $A=(0,L)\subset\mathbb{R}$ such that $L\leq \pi/2$. Then 
\begin{eqnarray*}
f(t)=\Pi_{i=1}^{2n-1}f_i(t),
\end{eqnarray*}
where
for every $i\in\{1,\cdots,2n-1\}$ there exists $\alpha_i\in[1,2]$ and $\beta_i\in\mathbb{R}$ such that
\begin{eqnarray*}
f_i(t)=\sin(\alpha_i t+\beta_i).
\end{eqnarray*}
\end{proposition}

\begin{proof}

Let $\nu$ be a needle candidate supported on a geodesic $\gamma\subset \mathbb{C}P^n$ which is parametrized by its arclength $t\in A\subseteq [0,\pi/2]$. Let $f$ be the density function of the pull-back of $\nu$.

All we know about $f$ at this stage is that $f$ is a $\sin^{2n-1}$-concave function. We will dig more and find more properties for $f$ which has to do with the geometry of $\mathbb{C}P^n$. 

\begin{lemma} \label{dig}
Let the interval $(0,L)$ be as above. Let $f$ be the density of the pull-back of the needle candidate $\nu$. Then
\begin{eqnarray*}
f=f_1 f_2
\end{eqnarray*}
where $f_1$ is a function satisfying the inequality:
\begin{eqnarray*}
f_1''+4f_1\leq 0,
\end{eqnarray*}
and $f_2$ is a $\sin^{2n-2}$-concave function.
\end{lemma}

\begin{proof}

The complex projective space has real dimension $2n$. Based on definition \ref{nc} of the needle candidates we know there exists a family of Jacobi fields $J_1(t),\cdots J_{2n-1}(t)$ along $\gamma$ for $t\in A$. Let $J_{2n}=\gamma'$ and assume this family consists of linearly independent (tangent) vectors. 

There exists a constant $c>0$ and the density of the needle $\nu$, denoted by $f$, equals: 
\begin{eqnarray*}
f(t)= c\sqrt{\det{(<J_i(t),J_k(t)>)_{i,k=1,\cdots, 2n}}}.
\end{eqnarray*}

The Riemannian volume $vol_{2n}$ of $\mathbb{C}P^n$ can be transported by the (inverse of the) exponential map on the tangent space at any point. By Linear algebra, we know that
\begin{eqnarray*}
\sqrt{\det{(<J_i(t),J_k(t)>)_{i,k=1,\cdots, 2n}}},
\end{eqnarray*}
is the determinant of the \emph{Gram} matrix associated to vectors $J_i(t)\in T_{\gamma(t)}\mathbb{C}P^n$ for $i=1,\cdots,2n$. This quantity is equal to the  (Riemannian) volume of the parallelepiped generated by the linearly independent tangent vectors $\{J_i(t)\}_{i=1}^{2n}$ in the tangent space $T_{\gamma(t)}\mathbb{C}P^n$. We denote by:
\begin{eqnarray*}
[J_1,\cdots,J_{2n}],
\end{eqnarray*}
the parallelepiped, generated by the vectors $J_i$ ($i=1,\cdots,2n$). For every $t\in A$ we denote (abusively) by $vol_{2n}$ the Riemannian volume in the tangent space $T_{\gamma(t)}\mathbb{C}P^n$. Here, the space of Jacobi fields along a geodesic $\gamma$ is a linear vector space of dimension $4n$. By definition \ref{nc}, there exists a point $t\in A$ on which we have:
\begin{eqnarray*}
<J_i(t),\gamma'(t)>\: =\: <J'_i(t),\gamma'(t)>\:=\: 0
\end{eqnarray*}
for $i=1,\cdots, 2n-1$ and
\begin{eqnarray*}
<J'_i(t),J_k(t)>\:=\:<J'_k(t),J_i(t)>,
\end{eqnarray*}
for $i,k=1,\cdots,2n-1$. An easy exercise concerning the Jacobi fields (see \cite{sakai},\cite{gallot}) asserts that if such equality holds for one point, then it holds on every point on $\gamma(t)$. Thus, the \emph{frame} of Jacobi fields $J_i$ are orthogonal to $\gamma'$ for every $t\in A$.

For every $i=1,\cdots,2n$, let
\begin{eqnarray*}
J_i(t)=f_i(t)E_i(t),
\end{eqnarray*}
where $\Vert E_i(t)\Vert=1$.

With the same argument presented in the proof of proposition (\ref{candreal}) we assume the family $\{E_i(t)\}$ forming an orthonormal basis of the tangent space at the point $\gamma(t)$ (and hence on every point along $\gamma(t)$). 

Let $t\in A$, and let $\gamma(t)$ be a point on the geodesic $\gamma$. Let $T_{\gamma(t)}\mathbb{C}P^n$ be the tangent space at this point. By linear algebra we know that:
\begin{eqnarray*}
vol_{2n}([J_1(t),\cdots,J_{2n}(t)])&=&\Vert J_{2n}(t)\Vert vol_{2n-1}([J_1(t),\cdots,J_{2n-1}]) \\
                                   &=& vol_{2n-1}([J_1(t),\cdots,J_{2n-1}]).
\end{eqnarray*}
Indeed, the geodesic $\gamma$ is parametrized by its arclength, hence
\begin{eqnarray*}
\Vert J_{2n}\Vert&=&\Vert\gamma'\Vert \\
                 &=& 1.
\end{eqnarray*}
Moreover,
\begin{eqnarray*}
[J_1(t),\cdots,J_{2n-1}],
\end{eqnarray*}
is the parallelepiped generated by $J_i(t)$ for $i=1,\cdots,2n-1$ and is contained in a $(2n-1)$-dimensional vector subspace of $T_{\gamma(t)}\mathbb{C}P^n$ (which is  orthogonal $\gamma'(t)$) . This parallelepiped is in fact a $(2n-1)$-dimensional rectangle as $\{J_i(t)\}_{i=1}^{2n}$ form an orthogonal basis of the tangent space $T_{\gamma(t)}\mathbb{C}P^n$.

At the point $\gamma(t)$, consider the orthonormal basis:
\begin{eqnarray*}
e_1,e_2,\cdots,e_{2n},
\end{eqnarray*}
where  for a $m\in\{1,\cdots,2n\}$, we have:
\begin{eqnarray*}
e_{m}=ie_{m-1},
\end{eqnarray*}
where $ie_{m-1}$ is the complex multiplication of $e_{m-1}$ by the complex number $i$. Therefore, the vectors $e_{m-1}$ and $e_m$ form an orthonormal base of $\mathbb{C}^2\subset T_{\gamma(t)}\mathbb{C}P^n$.



Again, by linear algebra, we have:
\begin{eqnarray*}
vol_{2n-1}([J_1(t),\cdots J_{2n-1}(t)]=\Vert J_m(t)\Vert vol_{2n-2}(R_{2n-2}(t)),
\end{eqnarray*}
where
\begin{eqnarray*}
R_{2n-2}(t)=[J_2(t),\cdots, J_m(t)^{\circ},\cdots, J_{2n-1}(t)],
\end{eqnarray*}
is the $(2n-2)$-dimensional rectangle on which $J_m$ is orthogonal. 

By the property of $\mathbb{C}P^n$, the sectional curvature of the $2$-plane $\{e_m,ie_m\}$ is equal to $4$. Since $\mathbb{C}P^n$ is a symmetric space, for every $t\in A$ the Jacobi field $J_m$, remains parallel along $\gamma(t)$ . This is a strong property which relies on the symmetry of $\mathbb{C}P^n$ and can be verified in more detail in \cite{gallot} and \cite{sakai}. 

Hence, by definition, $J_m$ is a Jacobi field on a $\mathbb{S}^2(4)$ which is a sphere of constant curvature $4$ and thus,  
 \begin{eqnarray*}
f_1(t)=\Vert J_m(t)\Vert,
\end{eqnarray*}
satisfies the following differential inequality:
\begin{equation} \label{eqn:closer}
f_1''+4f_1\leq 0.
\end{equation}

Let
\begin{eqnarray*}
f_2(t)=vol_{2n-2}([J_2(t),\cdots,J_m^{\circ},\cdots, J_{2n-1}(t)]).
\end{eqnarray*}

Since the norm of the Jacobi fields $J_i(t)$ for every $i\in\{2,\cdots,2n-2\}$ is a $\sin$-concave function for $t\in A$, therefore according to lemma \ref{comp}, the function $f_2(t)$ is a $\sin^{2n-2}$-concave function. This ends the proof of the lemma.

\end{proof}

Let 
\begin{eqnarray*}
p:\mathbb{S}^{2n+1}\to \mathbb{C}P^n
\end{eqnarray*}
be the Riemannian submersion from the canonical $2n+1$-dimensional sphere to the $n$-dimensional complex projective space.

Let $\gamma\subset \mathbb{C}P^n$ be a (minimal) geodesic on the complex projective space. It is known (see for instance \cite{sakai}) that for every  Jacobi field  $J$ along $\gamma$ , there exists a geodesic $\hat{\gamma}$ on $\mathbb{S}^{2n+1}$ and a Jacobi field along this geodesic which projects on $\gamma$ and $J$. According to Proposition (\ref{kr}) we know the general form of orthogonal Jacobi fields along a geodesic on $\mathbb{S}^{2n+1}$. Therefore, by projection on $\mathbb{C}P^n$ and applying lemma \ref{dig} the proof of the proposition is completed.

\end{proof}


For our purpose, we could go further and only consider those needle candidates $\nu$ such that the density of their pull-back is given by:
\begin{eqnarray*}
C\,f_1(t)f_2(t),
\end{eqnarray*}
where $C\in\mathbb{R}$ and where
\begin{eqnarray*}
f_1(t)&=&\Pi_{i=1}^{p}\sin(t+\alpha_i) \\
f_2(t)&=&\Pi_{i=1}^{k}\sin(2t+\beta_i).
\end{eqnarray*}
Where $\alpha_i, \beta_i\in\mathbb{R}$ and 
\begin{eqnarray*}
k+p=2n-1,
\end{eqnarray*}
with $k\leq p$.
Indeed we have:
\begin{lemma} \label{specnc}
Let $\nu$ be a needle candidate defined on $\mathbb{C}P^n$. Let the support of $\nu$ be a geodesic $\gamma$ parametrized by the interval $(0,L)$. Let $U\subset\mathbb{C}P^n$ be a domain such that for every $\gamma(t)$, the volume of the $(2n-1)$-dimensional hyper-surface $U_t\subset U$ which is orthogonal to $\gamma'(t)$ and which is generated by $\{J_1(t),\cdots,J_{2n-1}(t)\}$ is equal to
\begin{eqnarray*}
f(t)= c\sqrt{\det{(<J_i(t),J_k(t)>)_{i,k=1,\cdots, 2n-1}}}.
\end{eqnarray*}
Then, there exists a frame of orthogonal Jacobi fields $\{Q_1(t),\cdots,Q_{2n-1}(t)\}$ along $\gamma$ such that the $(2n-1)$-dimensional volume of $U_t$ is equal to:
\begin{eqnarray*}
C f_1(t)f_2(t),
\end{eqnarray*}
for some $C\in\mathbb{R}$ and $f_1,f_2$ defined as above.
\end{lemma}
\begin{proof}
We consider the exponential map with respect to the submanifold $N=U_0$. For this we need to define the normal bundle $\nu(N)$ which by definition is all the pairs $(x,v)$ such that $x\in N$ and $v$ is orthogonal to $T_x N$. Therefore the exponential map:
\begin{eqnarray*}
exp_N:\nu(N)\to\mathbb{C}P^n,
\end{eqnarray*}
is defined by 
\begin{eqnarray*}
exp_N(x,v)=exp_x(v).
\end{eqnarray*}
For more details on exponential maps with respect to submanifolds we refer to \cite{grey}.

Let $p=\gamma(0)\in N$ where $\gamma(0)'$ is orthogonal to $T_p N$. In $T_p\mathbb{C}P^n$ if we consider the straight line $t\to t\gamma(0)'$ and along this line we consider a linear vector field $X(t)=a+tb$ where $a,b\in T_p N$ are orthogonal vectors which are moved parallel to themselves, then the map $d exp_N$ sends this field to normal Jacobi fields along $\gamma$. It is a classical result and can be verified in textbooks that all orthogonal jacobi fields along $\gamma$ are obtained in this way. Therefore it is sufficient to chose suitable orthogonal linearly independent  $X_i=a_i+b_it$ and take $Y_i=d exp_N X_i$ such that:
\begin{eqnarray*}
\Pi_{i=1}^{2n-1}\Vert Y_i\Vert=C f_1(t)f_2(t).
\end{eqnarray*}
This ends the proof of the Lemma.
\end{proof}

Proposition \ref{procomp} suggests that the needle separation distance (on $\mathbb{C}P^n$) is obtained by the separation distance of the needles of the form $\mu_k=C_k\sin(t)^{2n-k}\cos(t)^k dt$ for some $k\in\{1,\cdots,2n-1\}$. This is already helpful and leaves one to find those (open) subsets of $\mathbb{C}P^n$ which realize the separation distances of the needles of form $\mu_k$ . The next proposition classifies this problem:

\begin{proposition} \label{endcomp}
Let $n\geq 1$. Let $\mu_k=C_k\sin(t)^{2n-k}\cos(t)^k dt$ for 
\begin{eqnarray*}
k\in\{1\cdots,2n-1\},
\end{eqnarray*}
be a probability measure on $[0,\pi/2]$. Let $\kappa_1,\kappa_2>0$ be such that:
\begin{eqnarray*}
\kappa_1+\kappa_2<1.
\end{eqnarray*}
Then, there exist $U_1,U_2 \subset\mathbb{C}P^n$ with
\begin{eqnarray*}
\frac{vol_{2n}(U_i)}{vol_{2n}(\mathbb{C}P^n)}=\kappa_i,
\end{eqnarray*}
for $i=1,2$
such that:
\begin{eqnarray*}
d(U_1,U_2)=Sep(\mu_k,\kappa_1,\kappa_2),
\end{eqnarray*}
if and only if we have $k=2m+1$ for $m\in\{0,\cdots n-1\}$.
\end{proposition}

\begin{proof}
If $k=2m-1$ for $1\leq k\leq n-1$, the subset which realizes the separation distance of $\mu_k$ is either a geodesic ball (and its complementary) or a tube around $\mathbb{C}P^k$ (and its complementary). Indeed, the formula for the volume of a (geodesic) ball in $\mathbb{C}P^n$ can be found in classic Riemannian Geometry textbooks such as in \cite{gallot} and \cite{grey}. Let $x\in\mathbb{C}P^n$, let $r>0$, the ball $B(x,r)\subset \mathbb{C}P^n$ is by definition all the points within distance $\delta$ to the point $x$. The \emph{measure} (normalized volume) is given by:
\begin{eqnarray*}
\frac{vol_{2n}(B(x,r))}{vol_{2n}(\mathbb{C}P^n)}=\frac{\displaystyle\int_{0}^{r}\sin(t)^{2n-1}\cos(t)dt}{\displaystyle\int_{0}^{\pi/2}\sin(t)^{2n-1}\cos(t)dt}.
\end{eqnarray*}

And let $k\geq 1$ and let $\mathbb{C}P^k\subset\mathbb{C}P^n$ be a totally geodesic complex submanifold of $\mathbb{C}P^n$. Let $\delta>0$. Let $\mathbb{C}P^k+\delta$ be a tube of radius $\delta$ around this $\mathbb{C}P^k$. The formula for the volume of such tubes can be found in \cite{grey} and are given by:
\begin{eqnarray*}
\frac{vol_{2n}(\mathbb{C}P^k+r)}{vol_{2n}(\mathbb{C}P^n)}=\frac{\displaystyle\int_{0}^{r}\sin(t)^{2n-2k+1}\cos(t)^{2k-1}dt}{\displaystyle\int_{0}^{\pi/2}\sin(t)^{2n-2k+1}\cos(t)^{2k-1}dt}.
\end{eqnarray*}

Now suppose $k=2m$ and there exist $U_1, U_2\subset\mathbb{C}P^n$ such that for $\kappa_1,\kappa_2>0$ with $\kappa_1+\kappa_2<1$, we have
\begin{eqnarray*}
\frac{vol_{2n}(U_i)}{vol_{2n}(\mathbb{C}P^n)}=\kappa_i,
\end{eqnarray*}
where $i=1,2$. Moreover, we have:
\begin{eqnarray*}
d(U_1,U_2)=Sep(\mu_k,\kappa_1,\kappa_2).
\end{eqnarray*}
Let
\begin{eqnarray*}
\sigma:[0,\pi/2]\to\mathbb{C}P^n,
\end{eqnarray*}
be a geodesic of (maximal) length $\pi/2$ parametrized by its arc length $t$. Let $\{J_1(t),\cdots,J_{2n}(t)\}$ be an orthogonal family of Jacobi fields along $\sigma$ such that $J_{2n}(t)$ coincides with the tangent vector along $\sigma$. We assume for every $t\in[0,\pi/2]$:
\begin{eqnarray*} 
\sqrt{\det(<J_i(t),J_k(t)>)_{i,k=1,\cdots,2n}}&=&\Pi_{i=1}^{2n}\Vert J_i(t)\Vert \\
                                             &=&C_k\sin(t)^{2n-2k}\cos(t)^{2k}.
\end{eqnarray*}


Let $S=\{J_{j1}(t),\cdots,J_{jl}(t)\}$ be the maximal subset of $\{J_1(t),\cdots,J_{2n-1}(t)\}$ such that $J_{j\beta}(0)\neq 0$ for every $\beta\in\{1,\cdots,l\}$. Because of the choice of Jacobi fields, we have $l\geq 1$. Indeed, if for every $i\in\{1,\cdots,2n-1\}$, we have $J_i(0)=0$, then $U_1$ represents a \emph{geodesic} ball in $\mathbb{C}P^n$ and the needle candidate which realizes the separation distance of a geodesic ball (and its complementary) in $\mathbb{C}P^n$ is $\mu=C\sin(t)^{2n-1}\cos(t)dt$ on a \emph{maximal} geodesic parametrized by $(0,\pi/2)$. 

By definition of Jacobi fields, for every $J_{j\beta}\in S$ there exists a (geodesic) variation:
\begin{eqnarray*}
\alpha:[0,\pi/2]\times (-\varepsilon,+\varepsilon)\to \mathbb{C}P^n,
\end{eqnarray*}
such that $\alpha$ is smooth and 
\begin{eqnarray*}
\alpha(t,0)=\gamma(t),
\end{eqnarray*}
and for every $s\in(-\varepsilon,+\varepsilon)$ the curve $\alpha_{s}(t)=\alpha(s,t)$ is a geodesic and such that:
\begin{eqnarray*}
\frac{\partial \alpha}{\partial t}(0,t)=J_{j\beta}(t).
\end{eqnarray*}

Moreover, by (covariant) differentiating the Jacobi fields $J_{j\beta}$ we have 
\begin{eqnarray*}
J_{j\beta}'(0)=0.
\end{eqnarray*}
Hence, the image of the variation for time $s=0$ lies in a totally geodesic submanifold of $\mathbb{C}P^n$. Therefore, the subset $U_1$ is a \emph{tube} around a totally geodesic submanifold of $\mathbb{C}P^n$, \emph{i.e.} we have a $l$-dimensional totally geodesic submanifold of $\mathbb{C}P^n$ such that $U_1=N+\delta$ for some $\delta>0$ and such that: 
\begin{eqnarray*}
\frac{vol_{2n}(N+\delta)}{vol_{2n}(\mathbb{C}P^n)}=\frac{\displaystyle\int_{0}^{\delta}\sin(t)^{2n-2k}\cos(t)^{2k}dt}{\displaystyle\int_{0}^{\pi/2}\sin(t)^{2n-2k}\cos(t)^{2k}dt}.
\end{eqnarray*}

Since every $J_{j\beta}\in S$ is such that the first zero is obtained at distance $\pi/2$, \emph{i.e.} $J_{j\beta}(\pi/2)=0$ and for every $t<\pi/2$, we have $J_{j\beta}(t)\neq 0$, therefore, the totally geodesic submanifold $N$ must be either a point or a $\mathbb{C}P^k\subset\mathbb{C}P^n$. Indeed, for totally geodesic submanifold of $\mathbb{C}P^n$ which consist of \emph{real} projective spaces $\mathbb{R}P^l$, the \emph{first} zero of such Jacobi fields are obtained at distance $\pi/4$. 

This completes the proof.

\end{proof}

\subsection{End proof of Theorem \ref{complexpro}}

Since the maximal focal distance of any submanifold in $\mathbb{C}P^n$ is equal to $\pi/2$, by applying Lemmas \ref{comp}, \ref{specnc} and \ref{dig}, for the purpose of estimating the needle separation distance, we can consider only those needle candidates $\nu$ such that the density $f$ of their pull-back is defined on the interval $(0,L)$ where $L\leq \pi/2$ and $f$ is of the form presented in proposition \ref{procomp}. Hence, let $\mu_1=f(t)dt$ be a probability measure on an interval $(0,L)$ where $L\leq \pi/2$ where:
\begin{eqnarray*}
f(t)=C_1 f_1(t)f_2(t)f_3(t),
\end{eqnarray*}
where
\begin{eqnarray*}
f_1(t)&=&\cos(t+a_1)\cdots\cos(t+a_p)\\ 
f_2(t)&=&\cos(t+b_1)\cdots\cos(t+b_q)\\
f_3(t)&=&\sin(2t+c_1)\cdots \sin(2t+c_s).
\end{eqnarray*}
where $C_1>0$, $s\geq 1$ and
\begin{eqnarray*}
p+q+s=2n-1.
\end{eqnarray*}
For $i\in\{1,\cdots\ \max\{p,q,s\}\}$ we have $a_i, c_i\geq 0$ and $b_i<0$. Moreover we assume $f(0)=f(L)=0$ which is a condition which is verified according to Lemmas \ref{dig} and \ref{comp}. Then, we chose $m_1, m_2, m_3$ such that
\begin{eqnarray*}
m_1+m_2+m_3=2n-1,
\end{eqnarray*}
such that
\begin{eqnarray*}
m_2&\leq& p\\
q&\leq& m_1\\
s&\leq& m_3,
\end{eqnarray*}
and
\begin{eqnarray*}
g(t)&=& C_2\sin(t)^{m_1}\cos(t)^{m_2}\sin(2t)^{m_3} \\
    &=& C_2\sin(t)^{2n-2k+1}\cos(t)^{2k-1},
\end{eqnarray*}
for some $k\in\{1\cdots,n\}$ and some $C_2>0$ such that $\mu_2=g(t)dt$ is a probability measure on $(0,\pi/2)$.

Therefore by applying proposition \ref{procomp} we deduce that for $\kappa_1,\kappa_2>0$ such that
\begin{eqnarray*}
\kappa_1+\kappa_2<1,
\end{eqnarray*}
there exists a $k\in\{1\cdots,n\}$ such that:
\begin{eqnarray*}
N_{\mathbb{C}P^n}(\kappa_1,\kappa_2)=Sep(C_k\sin(t)^{2n-2k+1}\cos(t)^{2k-1}dt,\kappa_1,\kappa_2),
\end{eqnarray*}
for some $k\in\{1,\cdots,n\}$.

Therefore, according to proposition \ref{impoo} we are left to find those subsets of $\mathbb{C}P^n$ which realize the above needle separation distances. This is presented in proposition \ref{endcomp}.

Hence, for every open set $U\subset\mathbb{C}P^n$ we have:
\begin{eqnarray*}
vol_{2n}(U+\delta)\geq vol_{2n}(B+\delta),
\end{eqnarray*}
where $B$ is a (geodesic) ball or a tube around some $\mathbb{C}P^k\subset\mathbb{C}P^n$ where:
\begin{eqnarray*}
vol_{2n}(U)=vol_{2n}(B).
\end{eqnarray*}

This ends the proof of Theorem \ref{complexpro} for the complex projective spaces.

\section{The proof of Theorem \ref{quaterpro} for $\mathbb{H}P^n$}

The proof for the quaternionic projective space is similar to the proof for the complex projective space. Despite this resemblance, we prefer to present the proof in complete details as the result is of great importance.

Quaternionic projective space has a very rich and interesting geometry. It is defined as the quotient of the $(4n+3)$-dimensional sphere by the action of the group $Sp(1)$ which is the group of unit quaternions which is isomorphism to the group structure on the $3$-dimensional sphere $\mathbb{S}^3$. It is a real $4n$-dimensional manifold. We inherit $\mathbb{H}P^n$ with the canonical Riemannian structure (the one which makes the submersion $\mathbb{S}^{4n+3}\to\mathbb{H}P^n$ into a Riemannian submersion) and denote $vol_{4n}$ by the Riemannian volume associated to the canonical Riemannian structure. The sectional curvature of $\mathbb{H}P^n$, similar to $\mathbb{C}P^n$, varies between $1$ and $4$.

Next, we continue with some geometric results.

\begin{proposition} \label{candquat}
Let $n\geq 1$. Let $\nu$ be a needle candidate on $\mathbb{H}P^n$. Let $f$ be the density function of the pull-back of $\nu$ on an interval $A=(0,L)\subset\mathbb{R}$ such that $L\leq \pi/2$. Then 
\begin{eqnarray*}
f(t)=\Pi_{i=1}^{4n-1}f_i(t),
\end{eqnarray*}
where
for every $i\in\{1,\cdots,4n-1\}$ there exists $\alpha_i\in[1,2]$ and $\beta_i\in\mathbb{R}$ such that
\begin{eqnarray*}
f_i(t)=\sin(\alpha_i t+\beta_i).
\end{eqnarray*}
\end{proposition}

\begin{proof}

Let $\nu$ be a needle candidate supported on a geodesic $\gamma\subset \mathbb{H}P^n$ which is parametrized by its arclength $t\in A\subseteq [0,\pi/2]$. Let $f$ be the density function of $\nu$.

We know that the function $f$ is a $\sin^{4n-1}$-concave function.

Similar to the previous section, we are interested in finding more properties for $f$ by considering the special geometry of $\mathbb{H}P^n$.

\begin{lemma} \label{dig2}
Let the function $f$ be as above. Then
\begin{eqnarray*}
f=f_1 f_2
\end{eqnarray*}
where $f_1$ is a function such that $q(t)=f_1^{\frac{1}{3}}$ satisfies the inequality:
\begin{eqnarray*}
q''+4q\leq 0,
\end{eqnarray*}
and $f_2$ is a $\sin^{4n-4}$-concave function.
\end{lemma}

\begin{proof}
The proof of this Lemma is similar to the proof of Lemma \ref{dig}.

The quaternionic projective space has real dimension $4n$. Based on definition \ref{nc} of the needle candidates we know there exists a family of Jacobi fields $J_1(t),\cdots J_{4n-1}(t)$ along $\gamma$ for $t\in A$. As in the definition \ref{nc}, we let $J_{4n}=\gamma'$. We assume this family consists of linearly independent (tangent) vectors. Otherwise, there is nothing to prove. 

There exists a constant $c>0$ and the density of the pull-back of the needle $\nu$, denoted by $f$, equals: 
\begin{eqnarray*}
f(t)= c\sqrt{\det{(<J_i(t),J_k(t)>)_{i,k=1,\cdots, 4n}}}.
\end{eqnarray*}

The Riemannian volume $vol_{4n}$ of $\mathbb{H}P^n$ can be transported by the (inverse of the) exponential map on the tangent space at any point. We know that
\begin{eqnarray*}
\sqrt{\det{(<J_i(t),J_k(t)>)_{i,k=1,\cdots, 4n}}},
\end{eqnarray*}
is the determinant of the \emph{Gram} matrix associated to vectors $J_i(t)\in T_{\gamma(t)}\mathbb{H}P^n$ for $i=1,\cdots,4n$. This quantity is equal to the  (Riemannian) volume of the parallelepiped generated by the linearly independent tangent vectors $\{J_i(t)\}_{i=1}^{4n}$ in the tangent space $T_{\gamma(t)}\mathbb{H}P^n$. We denote by:
\begin{eqnarray*}
[J_1,\cdots,J_{4n}],
\end{eqnarray*}
the parallelepiped, generated by the vectors $J_i$ ($i=1,\cdots,4n$). For every $t\in A$, we denote (abusively) by $vol_{4n}$ the Riemannian volume in the tangent space $T_{\gamma(t)}\mathbb{H}P^n$. Here, the space of Jacobi fields along a geodesic $\gamma$ is a linear vector space of dimension $8n$.

With the same argument as in the proof of Lemma \ref{dig} we consider the \emph{frame} of Jacobi fields $J_i$ to be orthogonal to $\gamma'$ for every $t\in A$ and the family $\{J_i(t)\}$ to form an orthogonal basis of the tangent space at each point on $\gamma(t)$.

Let $t\in A$ and let $\gamma(t)$ be a point on the geodesic $\gamma$. Let $T_{\gamma(t)}\mathbb{H}P^n$ be the tangent space at this point. We have:
\begin{eqnarray*}
vol_{4n}([J_1(t),\cdots,J_{4n}(t)])&=&\Vert J_{4n}(t)\Vert vol_{4n-1}([J_1(t),\cdots,J_{4n-1}]) \\
                                   &=& vol_{4n-1}([J_1(t),\cdots,J_{4n-1}]).
\end{eqnarray*}
Indeed, the geodesic $\gamma$ is parametrized by its arclength, hence 
\begin{eqnarray*}
\Vert J_{4n}\Vert &=&\Vert \gamma'\Vert \\
                  &=& 1.
\end{eqnarray*}
Moreover, 
\begin{eqnarray*}
[J_1(t),\cdots,J_{4n-1}],
\end{eqnarray*}
is the parallelepiped generated by $J_i(t)$ for $i=1,\cdots,4n-1$, and is contained in a $(4n-1)$-dimensional vector subspace of $T_{\gamma(t)}\mathbb{H}P^n$. This parallelepiped is in fact a $(4n-1)$-dimensional rectangle as $\{J_i(t)\}_{i=1}^{4n}$ form an orthogonal basis of the tangent space $T_{\gamma(t)}\mathbb{H}P^n$.

At the point $\gamma(t)$, consider the orthonormal basis:
\begin{eqnarray*}
e_1,e_2 \cdots e_{4n},
\end{eqnarray*}
where there exists an $m\in\{1,\cdots,4n-1\}$ such that
\begin{eqnarray*}
e_m &=&ie_{m-1} \\
e_{m+1}&=&je_{m-1}\\
e_{m+2}&=&ke_{m-1},
\end{eqnarray*}
where $\{i,j,k\}$ are the unit quaternions and $\{e_{m-1},e_{m},e_{m+1},e_{m+2}\}$ form an orthonormal basis of the (one-dimensional) quaternionic vector space $\subset T_{\gamma(t)}\mathbb{H}P^n$. 

Denote this vector sub-space by $T_4$.


We have:
\begin{eqnarray*}
vol_{4n-1}([J_1(t),\cdots J_{2n-1}(t)]=vol_3([J_m,J_{m+1},J_{m+2}]) vol_{4n-4}(P_{4n-4}),
\end{eqnarray*}
where
\begin{eqnarray*}
P_{4n-4}=[J_2(t),\cdots, J_m(t)^{\circ},J_{m+1}(t)^{\circ},J_{m+2}(t)^{\circ},\cdots, J_{4n-1}(t)],
\end{eqnarray*}
is the $(4n-4)$-dimensional rectangle on which $J_m,J_{m+1},J_{m+2}$ are orthogonal. 

By the property of $\mathbb{H}P^n$ and with similar arguments presented in the proof of Lemma \ref{dig}, we conclude that $\{J_m,J_{m+1},J_{m+2}\}$ are Jacobi fields on a $\mathbb{S}^4(4)$ which is a $4$-dimensional sphere of constant curvature $4$. Thus, let 
 \begin{eqnarray*}
f_1(t)=vol_3([J_p(t),J_q(t),J_s(t)]),
\end{eqnarray*}
and
\begin{eqnarray*}
q(t)=f_1(t)^{1/3}.
\end{eqnarray*}
Then, $q(t)$ satisfies the following differential inequality:
\begin{equation} \label{eqn:closerr}
q''+4q\leq 0.
\end{equation}

Let
\begin{eqnarray*}
f_2(t)=vol_{4n-4}(P_{4n-4}).
\end{eqnarray*}

Since the norm of the Jacobi fields $J_i(t)$ for every $i\in\{2,\cdots,4n-1\}$ except $J_m, J_{m+1}, J_{m+2}$ is a $\sin$-concave function for $t\in A$, therefore, according to lemma \ref{comp}, the function $f_2(t)$ is a $\sin^{4n-4}$-concave function. This ends the proof of the lemma.

\end{proof}

Let 
\begin{eqnarray*}
p:\mathbb{S}^{4n+3}\to \mathbb{H}P^n
\end{eqnarray*}
be the Riemannian submersion from the canonical $4n+3$-dimensional sphere to the $n$-dimensional quaternionic projective space.

Let $\gamma\subset \mathbb{H}P^n$ be a (minimal) geodesic on the quaternionic projective space. Let $J$ be a Jacobi field along $\gamma$. There exists a geodesic $\hat{\gamma}$ on $\mathbb{S}^{4n+3}$ and a Jacobi field along this geodesic which projects on $\gamma$ and $J$. According to Proposition (\ref{kr}) we know the general form of orthogonal Jacobi fields along a geodesic on $\mathbb{S}^{4n+3}$. Therefore, by projection on $\mathbb{H}P^n$ and applying lemma \ref{dig2} the proof of the proposition is completed.

\end{proof}

With the same arguments presented for $\mathbb{C}P^n$, here we can also only consider those needle candidates $\nu$ such that the density of their pull-back is given by:
\begin{eqnarray*}
C\,f_1(t)f_2(t),
\end{eqnarray*}
where $C\in\mathbb{R}$ and where
\begin{eqnarray*}
f_1(t)&=&\Pi_{i=1}^{p}\sin(t+\alpha_i) \\
f_2(t)&=&\Pi_{i=1}^{k}\sin(2t+\beta_i).
\end{eqnarray*}
Where $\alpha_i, \beta_i\in\mathbb{R}$ and 
\begin{eqnarray*}
k+p=4n-1,
\end{eqnarray*}
with $3\leq k\leq p$.

We now like to classify the needle candidates on $\mathbb{H}P^n$ which have density of their pull-backs given by $\sin(t)^m\cos(t)^s$ for some $s,m$ with $s+m=4n+2$. This is provided by the next:

\begin{proposition} \label{endquat}
Let $n\geq 1$. Let $\mu_k=C_k\sin(t)^{4n-k+2}\cos(t)^k dt$ for 
\begin{eqnarray*}
k\in\{1\cdots,4n-1\},
\end{eqnarray*}
be a probability measure on $[0,\pi/2]$. Let $\kappa_1,\kappa_2>0$ be such that:
\begin{eqnarray*}
\kappa_1+\kappa_2<1.
\end{eqnarray*}
Then, there exist $U_1,U_2 \subset\mathbb{H}P^n$ with
\begin{eqnarray*}
\frac{vol_{4n}(U_i)}{vol_{4n}(\mathbb{H}P^n)}=\kappa_i,
\end{eqnarray*}
for $i=1,2$
such that:
\begin{eqnarray*}
d(U_1,U_2)=Sep(\mu_k,\kappa_1,\kappa_2),
\end{eqnarray*}
if and only if we have $k=4m+3$ for $m\in\{0,\cdots n-1\}$.
\end{proposition}

\begin{proof}
If $k=4m+3$ for $1\leq m\leq n-1$, the subset which realizes the separation distance of $\mu_k$ is either a geodesic ball (and its complementary) or a tube around $\mathbb{H}P^k$ (and its complementary). Indeed, let $x\in\mathbb{H}P^n$, let $r>0$, the ball $B(x,r)\subset \mathbb{H}P^n$ is by definition all the points within distance $r$ to the point $x$. For $k\geq 1$ let $\mathbb{H}P^k$ be a totally geodesic quaternionic submanifold of dimension $k$ in $\mathbb{H}P^n$ and for $\delta>0$, the tube $\mathbb{H}P^k+\delta$ around this submanifold is the set of points within distance $\delta$ to $\mathbb{H}P^n$.

One may consult \cite{grey} (for the volume of balls in $\mathbb{H}P^n$) and \cite{totgeo} (for the volume of tubes around totally geodesic $\mathbb{H}P^k\subset\mathbb{H}P^n$) and verify that:
\begin{eqnarray*}
\frac{vol_{4n}(B(x,r))}{vol_{4n}(\mathbb{H}P^n)}=\frac{\displaystyle\int_{0}^{r}\sin(t)^{4n-1}\cos(t)^{3}dt}{\displaystyle\int_{0}^{\pi/2}\sin(t)^{4n-1}\cos(t)^{3}dt}.
\end{eqnarray*}
And

\begin{eqnarray*}
\frac{vol_{4n}(\mathbb{H}P^k+\delta)}{vol_{4n}(\mathbb{H}P^n)}=\frac{\displaystyle\int_{0}^{r}\sin(t)^{4n-4k-1}\cos(t)^{4k+3}dt}{\displaystyle\int_{0}^{\pi/2}\sin(t)^{4n-4k-1}\cos(t)^{4k+3}dt}.
\end{eqnarray*}

Now suppose $k\neq 4m+3$ and there exist $U_1, U_2\subset\mathbb{H}P^n$ such that for $\kappa_1,\kappa_2>0$ with $\kappa_1+\kappa_2<1$, we have
\begin{eqnarray*}
\frac{vol_{4n}(U_i)}{vol_{4n}(\mathbb{H}P^n)}=\kappa_i,
\end{eqnarray*}
where $i=1,2$. Moreover, we have:
\begin{eqnarray*}
d(U_1,U_2)=Sep(\mu_k,\kappa_1,\kappa_2).
\end{eqnarray*}
Let
\begin{eqnarray*}
\sigma:[0,\pi/2]\to\mathbb{H}P^n,
\end{eqnarray*}
be a geodesic of (maximal) length $\pi/2$ parametrized by its arc length $t$. Let $\{J_1(t),\cdots,J_{4n}(t)\}$ be an orthogonal family of Jacobi fields along $\sigma$ such that $J_{4n}(t)$ coincides with the tangent vector along $\sigma$. We assume for every $t\in[0,\pi/2]$ we have:
\begin{eqnarray*} 
\sqrt{\det(<J_i(t),J_k(t)>)_{i,k=1,\cdots,4n}}&=&\Pi_{i=1}^{4n}\Vert J_i(t)\Vert \\
                                             &=&C_m\sin(t)^{4n-k+2}\cos(t)^{k}.
\end{eqnarray*}


Let $S=\{J_{j1}(t),\cdots,J_{jl}(t)\}$ be the maximal subset of $\{J_1(t),\cdots,J_{4n-1}(t)\}$ such that $J_{j\beta}(0)\neq 0$ for every $\beta\in\{1,\cdots,l\}$. Because of the choice of Jacobi fields, we have $l\geq 1$. Indeed if for every $i\in\{1,\cdots,4n-1\}$, we have $J_i(0)=0$, then $U_1$ represents a \emph{geodesic} ball in $\mathbb{H}P^n$ and the needle candidate which realizes the separation distance of a geodesic ball in $\mathbb{H}P^n$ (and its complementary) is $\mu=C\sin(t)^{4n-1}\cos(t)^3dt$ on a \emph{maximal} geodesic parametrized by $(0,\pi/2)$. 

By definition of Jacobi fields, for every $J_{j\beta}\in S$ there exists a (geodesic) variation:
\begin{eqnarray*}
\alpha:[0,\pi/2]\times (-\varepsilon,+\varepsilon)\to \mathbb{H}P^n,
\end{eqnarray*}
such that $\alpha$ is smooth and 
\begin{eqnarray*}
\alpha(t,0)=\gamma(t),
\end{eqnarray*}
and for every $s\in(-\varepsilon,+\varepsilon)$ the curve $\alpha_{s}(t)=\alpha(s,t)$ is a geodesic and such that:
\begin{eqnarray*}
\frac{\partial \alpha}{\partial t}(0,t)=J_{j\beta}(t).
\end{eqnarray*}

Moreover, by (covariant) differentiating the Jacobi fields $J_{j\beta}$ we have 
\begin{eqnarray*}
J_{j\beta}'(0)=0.
\end{eqnarray*}
Hence, the image of the variation for time $s=0$ lies in a totally geodesic submanifold of $\mathbb{H}P^n$. Therefore, the subset $U_1$ is a \emph{tube} around a totally geodesic submanifold of $\mathbb{H}P^n$, \emph{i.e.} we have a $l$-dimensional totally geodesic submanifold of $\mathbb{H}P^n$ such that $U_1=N+\delta$ for some $\delta>0$ and such that: 
\begin{eqnarray*}
\frac{vol_{4n}(N+\delta)}{vol_{4n}(\mathbb{H}P^n)}=\frac{\displaystyle\int_{0}^{\delta}\sin(t)^{m}\cos(t)^{s}dt}{\displaystyle\int_{0}^{\pi/2}\sin(t)^{m}\cos(t)^{s}dt}.
\end{eqnarray*}

Since every $J_{j\beta}\in S$ is such that the first zero is obtained at distance $\pi/2$, \emph{i.e.} $J_{j\beta}(\pi/2)=0$ and for every $t<\pi/2$, we have $J_{j\beta}(t)\neq 0$, therefore, the totally geodesic submanifold $N$ must be either a point or a $\mathbb{H}P^k\subset\mathbb{C}P^n$. Indeed, for totally geodesic submanifolds of $\mathbb{H}P^n$ which consist of \emph{real} projective spaces $\mathbb{R}P^l$ or complex projective space $\mathbb{C}P^l$, the \emph{first} zero of such Jacobi fields are obtained at distance $<\pi/2$.  

The proof hence is completed.

\end{proof}

\subsection{End Proof of Theorem \ref{quaterpro}}

Since the maximal focal distance of any submanifold in $\mathbb{H}P^n$ is equal to $\pi/2$, by applying Lemmas \ref{comp} and \ref{dig2}, for the purpose of estimating the needle separation distance, we can consider only those needle candidates $\nu$ such that the density $f$ of their pull-back is defined on the interval $(0,L)$ where $L\leq \pi/2$ and $f$ is of the form presented in proposition \ref{procomp}. Hence, let $\mu_1=f(t)dt$ be a probability measure on an interval $(0,L)$ where $L\leq \pi/2$ where:
\begin{eqnarray*}
f(t)=C_1 f_1(t)f_2(t)f_3(t),
\end{eqnarray*}
where
\begin{eqnarray*}
f_1(t)&=&\cos(t+a_1)\cdots\cos(t+a_p)\\ 
f_2(t)&=&\cos(t+b_1)\cdots\cos(t+b_q)\\
f_3(t)&=&\sin(2t+c_1)\cdots \sin(2t+c_s).
\end{eqnarray*}
where $C_1>0$, $s\geq 3$ and
\begin{eqnarray*}
p+q+s=4n-1.
\end{eqnarray*}
For $i\in\{1,\cdots\ \max\{p,q,s\}\}$ we have $a_i, c_i\geq 0$ and $b_i<0$. Moreover we assume $f(0)=f(L)=0$ which is a condition which is verified according to Lemma \ref{dig2}. Then, we chose $m_1, m_2, m_3$ such that
\begin{eqnarray*}
m_1+m_2+m_3=4n-1,
\end{eqnarray*}
such that
\begin{eqnarray*}
m_2&\leq& p\\
q&\leq& m_1\\
s&\leq& m_3,
\end{eqnarray*}
and
\begin{eqnarray*}
g(t)&=& C_2\sin(t)^{m_1}\cos(t)^{m_2}\sin(2t)^{m_3} \\
    &=& C_2\sin(t)^{4n-4k-1}\cos(t)^{4k+3},
\end{eqnarray*}
for some $k\in\{0\cdots,n-1\}$ and some $C_2>0$ such that $\mu_2=g(t)dt$ is a probability measure on $(0,\pi/2)$.

By applying proposition \ref{procomp} we deduce that for $\kappa_1,\kappa_2>0$ such that
\begin{eqnarray*}
\kappa_1+\kappa_2<1,
\end{eqnarray*}
there exists a $k\in\{1\cdots,n-1\}$ such that:
\begin{eqnarray*}
N_{\mathbb{H}P^n}(\kappa_1,\kappa_2)=Sep(C_k\sin(t)^{4n-4k-1}\cos(t)^{4k+3} dt,\kappa_1,\kappa_2).
\end{eqnarray*}

Therefore, according to Proposition \ref{impoo}, it remains to find those subsets of $\mathbb{H}P^n$ which realize the above (optimal) needle separation distance. This is presented in Proposition \ref{endquat}.

Therefore, we apply the result of Proposition \ref{impoo} and deduce that for every open set $U\subset \mathbb{H}P^n$, we have: 
\begin{eqnarray*}
vol_{4n}(U+\delta)\geq vol_{4n}(B+\delta),
\end{eqnarray*}
where $B$ is either a (geodesic) ball or a tube around a totally geodesic $\mathbb{H}P^k\subset\mathbb{H}P^n$ such that
\begin{eqnarray*}
vol_{4n}(U)=vol_{4n}(B).
\end{eqnarray*}
This ends the proof of Theorem \ref{quaterpro} for the quaternionic projective spaces.


\section{Remarks and Questions}

Here, we list a few remarks as well as some open problems related to the topic of this paper.
\begin{itemize}
\item The first question concerns the sharp isoperimetric inequality on the Cayley projective plane $CaP^2$. We were not able to properly use the geometry of this space in order to study the needles defined upon $CaP^2$. The main reason for this is the fact that there is not a Riemannian submersion from some round sphere to $Cap^2$, and this makes the study of needle candidates more delicate compared to the cases for $\mathbb{R}P^n$, $\mathbb{C}P^n$ and $\mathbb{H}P^n$. It is certainly a very interesting problem of studying needles and the isoperimetry of $CaP^2$ with the methods of the present paper.

\item Keeping the spirit of \emph{positive curvature} and $CD(n-1,n)$-needles, it is interesting to study the sharp isoperimetric inequality on Lens spaces. One is invited to study needle candidates on such spaces further and to examine the optimal needle separation distance for such needles. Consult \cite{va} for the study of isoperimetric inequalities on Lens spaces.

\item Does Klartag's needle decomposition also hold for non-Riemannian metric spaces? This is in comparison with the recent work of Cavalleti and Mondino in \cite{mond}. 

\item Is it possible to generalize Klartag's partition theorem \ref{klartag} to partitions where the measures (\emph{generalized needles}) are supported on higher (co)-dimensional sets? A generalization of needles in higher dimension is required and for the case of the round sphere this is provided in \cite{growst} and further explained in \cite{memwst}.

\item For us, the most interesting problem is to study isoperimetric inequalities on (compact) quotients of the Hyperbolic space $\mathbb{H}^n$. The needles on the negatively- curved Riemannian manifolds should (probably) behave as $\sinh^n$-concave probability measures. As far as we know, the needles are not yet studied on negatively curved Riemannian manifolds. The needle decomposition tool may be an interesting tool in order to study the famous \emph{Cartan-Hadamard} conjecture/theorem (as proof of this long standing conjecture is presented in \cite{ghomi}). This problem consists of isoperimetric inequality on Cartan-Hadamard manifolds.
\end{itemize}

\end{document}